\providecommand{\tightlist}{%
  \setlength{\itemsep}{0pt}\setlength{\parskip}{0pt}}
\theoremstyle{theorem}
\newtheorem{theorem}{Theorem}[section]
\theoremstyle{lemma}
\newtheorem{lemma}{Lemma}[section]
\theoremstyle{definition}
\newtheorem{definition}{Definition}[section]
\theoremstyle{proposition}
\newtheorem{proposition}{Proposition}[section]
\theoremstyle{corollary}
\newtheorem{corollary}{Corollary}[section]
\theoremstyle{remark}
\newtheorem{remark}{Remark}[section]
\theoremstyle{conjecture}
\newtheorem{conjecture}{Conjecture}[section]
\theoremstyle{definition}
\newtheorem{example}{Example}[section]
\author{Ran Gutin \\ Department of Computer Science, Imperial College London}
\date{\today}
\title{Unitary canonical forms over Clifford algebras, and an observed unification of some real-matrix decompositions}
\newcommand{\cl}{Cl}
\newcommand{\unpack}{\operatorname{unpack}}
\begin{document}

\maketitle

\begin{abstract}
  We show that the spectral theorem -- which we understand to be a statement that every self-adjoint matrix admits a certain type of canonical form under unitary similarity -- admits analogues over other $*$-algebras distinct from the complex numbers. If these $*$-algebras contain nilpotents, then it is shown that there is a consistent way in which many classic matrix decompositions -- such as the Singular Value Decomposition, the Takagi decomposition, the skew-Takagi decomposition, and the Jordan decomposition, among others -- are immediate consequences of these. If producing the relevant canonical form of a self-adjoint matrix were a subroutine in some programming language, then the corresponding classic matrix decomposition would be a 1-line invocation with no additional steps. We also suggest that by employing operator overloading in a programming language, a numerical algorithm for computing a unitary diagonalisation of a complex self-adjoint matrix would generalise immediately to solving problems like SVD or Takagi. While algebras without nilpotents (like the quaternions) allow for similar unifying behaviour, the classic matrix decompositions which they unify are never obtained as easily. In the process of doing this, we develop some spectral theory over Clifford algebras of the form $\cl_{p,q,0}(\mathbb R)$ and $\cl_{p,q,1}(\mathbb R)$ where the former is admittedly quite easy. We propose a broad conjecture about spectral theorems.
\end{abstract}

\section{Introduction}

In this paper, we prove ``spectral theorems'' for all Clifford $*$-algebras with a limited amount of degeneracy (up to 1 nilpotent generator) and suggest that a ``unification'' between some classic matrix decompositions results from this. The unification is interesting because: If producing the canonical form were a subroutine, then these classical matrix decompositions would be obtained from 1 application of the subroutine, and nothing more. Other reductions between matrix decompositions are usually less ``efficient'' (in the sense of not being just 1 application with no additional steps). Some connections with the theory of quiver representations are also obtained.

The classic matrix decompositions we consider are the:
\begin{itemize}
    \item The unitary diagonalisation of a self-adjoint matrix.
    \item The Takagi decomposition \cite{autonne_sur_1902} of a complex-symmetric matrix.
    \item The skew-Takagi decomposition \cite{teretenkov_singular_2021} of a complex skew-symmetric matrix.
    \item The SVD of a real matrix.
    \item The Jordan decomposition of a real matrix.
\end{itemize}
In some sense, the first of these cases is equivalent to the rest, if one varies the $*$-algebra. We will now discuss the notion of a $*$-algebra, and its motivation.

We are intending to generalise notions like \emph{self-adjoint matrix}, \emph{unitary matrix} and \emph{singular value decomposition} (among others) to various ``number systems'' for various reasons. These notions were originally developed over the complex numbers $\mathbb C$. If we focus attention to the notion of, let's say, a unitary matrix, we see that to generalise this to novel ``number systems'' it is not sufficient to simply redefine the operations $\{+,-,\times,\div\}$, but also the \emph{complex conjugation} operation, which we will denote $*$. A bit of experience with similar ``number systems'' (like the quaternions, or the $2 \times 2$ real matrices) suggests that a promising generalisation of complex conjugation over a ring would be an arbitrary ring anti-automorphism of order up to $2$. Such an operation is called an \emph{involution}, and we must include it in our list of operations to redefine: $\{+,-,\times,\div,*\}$.

The above discussion suggests that the claim in some linear algebra courses that linear algebra takes place over \emph{fields} is incorrect, because notions like unitary matrices are defined in terms of involutions which are not field operations. It is thus interesting to suggest that linear algebra might be done instead over $*$-fields, which are fields equipped with involutions. Unfortunately, this is not sufficient for our paper, where our ``number systems'' may contain zero divisors.

When we refer to an algebra over a field, we understand this to be something unital, associative and finite-dimensional. The notion of an algebra is not sufficient because there are many involutions which an algebra can be equipped with. The notion of a $*$-algebra \cite{nlab_authors_star-algebra_2022} is clearly a better notion of ``number system'' than just an algebra when generalising spectral theory. Our $*$-algebras will be over $*$-fields in the expected way.

Most of the $*$-algebras we'll consider will be Clifford $*$-algebras over a $*$-field $\mathbb F$ which will be either the real number $\mathbb R$ or the complex numbers $\mathbb C$ equipped with their standard involutions. We will in fact consider \emph{two} different involutions for each algebra.

To give a very quick example of why passing from $\mathbb C$ (equipped with its standard involution) to a larger $*$-algebra can unify matrix decompositions, consider the Takagi decomposition (though the SVD would provide a very similar example): This states that given a $\mathbb C$-matrix $M$ which satisfies $M = M^T$, there is a unitary matrix $U$ and a $\mathbb R$-diagonal matrix $D$ such that $M = UDU^T$. Notice that while this \emph{looks like} a diagonalisation of a linear map, it actually isn't one because $U^T \neq U^{-1}$. The columns of $U$ are not eigenvectors. But now imagine introducing a new imaginary number $\delta$ which satisfies $\delta i = -i \delta$ and $\delta^2 = 0$. We then have that $M \delta = UDU^T \delta = U (D \delta) U^* = U(D \delta) U^{-1}$. We see that while the columns of $U$ are not eigenvectors of $M$, they are instead eigenvectors of $M \delta$. Additionally, if we extend the involution $*$ so that $\delta^* = \delta$, we get that $M \delta$ is self-adjoint, which $M$ wasn't. If we brazenly assume that $M \delta$ can be unitarily diagonalised (by an optimistic extension of the spectral theorem), then it's immediate that the unitary diagonalisation will take the form $M \delta = U (D \delta) U^*$ where $U$ is immediately a $\mathbb C$-matrix and $D$ is immediately an $\mathbb R$-matrix. This is more efficient than introducing the quaternion $j$ which satisfies $ji = ij$ but not $j^2 = 0$, because we cannot immediately conclude that the unitary diagonalisation of $Mj$ will yield the Takagi decomposition.

\subsection{Preliminary definitions}

We assume the reader knows what a field is. A $*$-field is a pair $(\mathbb F,*_{\mathbb F})$ where $\mathbb F$ is a field and $*_{\mathbb F} : \mathbb F \to \mathbb F$ is a function called the involution. The involution satisfies $(x + y)^* = x^* + y^*$, $(xy)^* = x^* y^*$, $(-x)^* = -x^*$, $1^* = 1$, $0^* = 0$.

A $*$-algebra $(\mathcal A, * : \mathcal A \to \mathcal A)$ over a $*$-field $(\mathbb F, *_{\mathbb F})$ is an algebra over $\mathbb F$ equipped with a map $*:\mathcal A \to \mathcal A$ which we call the involution, satisfying $(x^*)^* = x$, $(xy)^* = y^* x^*$ and $(x + \lambda y)^* = x^* + \lambda^{*_{\mathbb F}} y^*$. In this paper, we assume that our $*$-algebras are both associative and unital. Associativity means $(xy)z = x(yz)$. Unital means that there exists an element $1 \in \mathcal A$ such that $x1 = 1x = x$, and $1^* = 1$.

We hope that it is clear what a matrix should be over a $*$-algebra $(\mathcal A, * : \mathcal A \to \mathcal A)$. We define a map which we \emph{also call $*$} over matrices over a $*$-algebra, which we define to satisfy $(M^*)_{ij} = (M_{ji})^*$ (where the $*$ on the right hand side is the involution over the $*$-algebra, but the $*$ on the LHS is a map over matrices), which we call either the \emph{adjoint} or the \emph{conjugate-transpose}. A self-adjoint matrix is one which satisfies $M^* = M$, and a unitary matrix is one which satisfies $M^* = M^{-1}$.

\section{The double number example, and Jordan decomposition} \label{double-number-examples}

We will illustrate how matrix-decomposition-unification and ``spectral theorems'' over ``exotic'' $*$-algebras relate to each other by considering the Clifford algebra $Cl_{1,0,0}(\mathbb R)$ (which is merely an algebra until equipped with an involution). This algebra is called by different names: Sometimes it's called the ``double numbers'' or the ``split-complex numbers''. We will call it the double numbers \cite{gutin_matrix_2021}. This algebra is sometimes defined as being ``like the complex numbers'' but with the role of $i$ being replaced with a number $j$ for which $j^2=1$. This results in the number $1$ having 4 different square roots. This algebra is isomorphic to $\mathbb R \oplus \mathbb R$. This implies that the algebra is far more convenient to work with when its elements are written as $(a,b)$, and all arithmetic operations $\{+,-,\times,\div\}$ are understood to happen componentwise. We now consider two possible involutions.

\subsection{The sterile $*_1$ involution}

The first involution will be denoted $*_1$. This will be defined by $(a,b)^{*_1} = (a,b)$. The corresponding $*$-algebra over $\mathbb R$ will be denoted \newcommand{\rrtriv}{(\mathbb R \oplus \mathbb R, *_1)}$\rrtriv$. This is a trivial definition, and will result in a rather sterile ``spectral theorem''. How should we write an $\rrtriv$-matrix? We will write it in the form $(M,K)$ where $M$ and $K$ are real matrices of equal dimensions. What we would be the adjoint operation (sometimes called conjugate-transpose)? It would simply be $(M,K)^* = (M^T,K^T)$. Based on this, we see that $(M,K)$ is unitary (respectively, self-adjoint) whenever $M$ and $K$ are individually unitary (respectively, self-adjoint). Trivially we obtain a spectral theorem: Given a self-adjoint $\rrtriv$-matrix $(H,K)$, we see that there exists an $\rrtriv$-unitary matrix $(U,V)$ and an $\rrtriv$-diagonal matrix $(D,E)$ such that $(H,K) = (U,V) (D,E) (U,V)^*$. But this is trivial and uninteresting.

\subsection{The intriguing $*_{-1}$ involution}

More interesting would be to consider the other possible involution. This involution will be denoted $*_{-1}$, but otherwise just $*$ if ambiguity won't arise. This will be defined by $(a,b)^{*_{-1}} = (b,a)$. The corresponding $*$-algebra over $\mathbb R$ will be denoted \newcommand{\rrnon}{(\mathbb R \oplus \mathbb R, *_{-1})}$\rrnon$. We argue that an $\rrnon$-matrix should now be written as $[M,K]$, and define this to mean $(1,0)M + (0,1)K^T$. We use square brackets instead of round brackets because of the transpose, which ultimately serves to simplify things.\footnote{We think it would be fairer to reserve round bracket notation $(M,K)$ for when it stands for $(1,0)M + (0,1)K$.} We observe the following identities:
$$\begin{aligned}[A,B][C,D] &= [AC,DB] \\ [A,B] + [C,D] &= [A+C,B+D] \\ [A,B]^* &= [B,A]\end{aligned}$$
By defining the square bracket notation the way we did, we made multiplication slightly more complicated while greatly simplifying the adjoint operation (which is the third one in our list). The multiplication operation is made more complicated because the second component multiplies in the opposite order to the first component.

What is now a self-adjoint matrix over $\rrnon$? It needs to satisfy $[M,K]^* = [M,K]$. This simplifies to $[K,M] = [M,K]$, so it is precisely of the form $[M,M]$ where $M$ is an arbitrary square real matrix. Note that the embedding $M \mapsto [M,M]$ of square real matrices into $\rrnon$-matrices does not preserve multiplication, and so is not a ring homomorphism.

What is now an $\rrnon$-unitary matrix? It needs to satisfy $[M,K]^* = [M,K]^{-1}$. This simplifies to $[K,M] = [M^{-1},K^{-1}]$, so it is precisely of the form $[P,P^{-1}]$ where $P$ is an arbitrary invertible real matrix.

Given a self-adjoint $\rrnon$-matrix $[M,M]$, the matrices unitarily similar to it are all of the form $[P,P^{-1}] [M,M] [P,P^{-1}]^*$, which simplifies to $[PMP^{-1}, PMP^{-1}]$. The ``spectral theorem'' for $\rrnon$ is thus essentially equivalent to the Jordan decomposition for $\mathbb R$-matrices: $[M,M] = [P,P^{-1}][J,J][P^{-1},P]$ where $J$ is a unique Jordan matrix. Thus, the spectral theorem for $\rrnon$ is (somehow) the same as the $\mathbb R$-Jordan decomposition. This achieves a unification.

In summary:

\newcommand{\doublenontriv}{(\mathbb R \oplus \mathbb R, *_{-1})}
\begin{definition}
    The $*$-algebra $\doublenontriv$ has as its involution $(a,b)^{*_{-1}} = (b,a)$. A matrix over $\doublenontriv$ is written in the form $[M,K]$, defined to mean $(1,0)M + (0,1)K^T$. The square brackets hint at the transpose.
\end{definition}
\begin{proposition}
    A self-adjoint $\doublenontriv$-matrix is of the form $[M,M]$ where $M$ is an arbitrary square $\mathbb R$-matrix. A unitary $\doublenontriv$-matrix is of the form $[P,P^{-1}]$ where $P$ is an invertible $\mathbb R$-matrix.
\end{proposition}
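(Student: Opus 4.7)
The plan is to apply directly the three identities already established in the excerpt, namely $[A,B][C,D] = [AC,DB]$, $[A,B] + [C,D] = [A+C, B+D]$, and $[A,B]^* = [B,A]$, together with the straightforward observation that $[I,I]$ is the multiplicative identity (since $[I,I][C,D] = [IC, DI] = [C,D]$ and similarly on the other side).

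First I would dispatch the self-adjoint case. Writing an arbitrary matrix as $[M,K]$, the condition $[M,K]^* = [M,K]$ becomes $[K,M] = [M,K]$ by the adjoint identity. Equality of $\doublenontriv$-matrices written in square-bracket form reduces to componentwise equality of the underlying real matrices (since the decomposition $(1,0)M + (0,1)K^T$ is unique), so this forces $K = M$. Conversely, any matrix of the form $[M,M]$ is clearly self-adjoint.

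For the unitary case I would first identify the inverse. If $[M,K]$ is invertible with inverse $[X,Y]$, then $[M,K][X,Y] = [MX, YK] = [I,I]$ forces $MX = I$ and $YK = I$, so $M$ and $K$ are individually invertible and $[M,K]^{-1} = [M^{-1}, K^{-1}]$; a symmetric calculation on the other side gives the same conclusion. The equation $[M,K]^* = [M,K]^{-1}$ then becomes $[K,M] = [M^{-1}, K^{-1}]$, and componentwise matching yields $K = M^{-1}$ (the second component $M = K^{-1}$ is the same condition). Hence unitary $\doublenontriv$-matrices are exactly those of the form $[P, P^{-1}]$ for $P \in \mathrm{GL}_n(\mathbb{R})$, and conversely any such matrix is easily checked to be unitary.

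No step is really an obstacle here: this is essentially bookkeeping with the three identities the excerpt has already derived, and the only point that deserves a sentence of justification is the uniqueness of the $[M,K]$ representation, which follows from the directness of the decomposition $\mathbb R \oplus \mathbb R = (1,0)\mathbb R \oplus (0,1)\mathbb R$ applied entrywise to matrices.
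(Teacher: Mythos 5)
Your proof is correct and matches the paper's intent exactly: the paper dismisses this proposition with ``the proofs are immediate,'' having already spelled out the self-adjoint and unitary computations informally in the text, and your argument is simply a careful write-up of that same bookkeeping (including the helpful explicit verification that $[M,K]^{-1} = [M^{-1},K^{-1}]$). No gaps.
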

\begin{remark}
    The mapping $M \mapsto [M,M]$ preserves the additive, but not the multiplicative, structure of $\mathbb R$-matrices.
\end{remark}
\begin{theorem} Every self-adjoint $\doublenontriv$-matrix is unitarily similar to a unique matrix of the form $[J,J]$, where $J$ is a Jordan matrix, and the uniqueness is up to permutation of the Jordan blocks of $J$.
\end{theorem}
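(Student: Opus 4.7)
The plan is to reduce the statement directly to the classical real Jordan decomposition theorem, which I will cite as a known fact. The reduction rests entirely on a short computation showing that unitary similarity in $\doublenontriv$ is the same thing as ordinary similarity of real matrices.

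First, I would use the two propositions already established: a self-adjoint matrix has the form $[M,M]$ with $M \in \mathbb{R}^{n \times n}$ arbitrary, and a unitary matrix has the form $[P,P^{-1}]$ with $P$ invertible real. Then I would compute the action of unitary conjugation. Using the identities $[A,B][C,D] = [AC,DB]$ and $[A,B]^* = [B,A]$, we get
\[
[P,P^{-1}]\,[M,M]\,[P,P^{-1}]^*
= [P,P^{-1}]\,[M,M]\,[P^{-1},P]
= [PMP^{-1},\,PMP^{-1}].
\]
Thus the orbit of $[M,M]$ under unitary similarity in $\doublenontriv$ is precisely $\{[PMP^{-1},PMP^{-1}] : P \in GL_n(\mathbb{R})\}$, i.e.\ the image under the embedding $N \mapsto [N,N]$ of the real-similarity orbit of $M$.

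Next I would invoke the real Jordan decomposition theorem: every real square matrix $M$ is similar (over $\mathbb{R}$) to a real Jordan matrix $J$, unique up to permutation of its Jordan blocks. Applying this to $M$ and then embedding via $N \mapsto [N,N]$ gives existence: $[M,M]$ is unitarily similar to $[J,J]$. For uniqueness, if $[J,J]$ and $[J',J']$ are unitarily similar in $\doublenontriv$, the above computation forces $J' = PJP^{-1}$ for some real invertible $P$, and uniqueness of the real Jordan form then gives that $J$ and $J'$ agree up to permutation of blocks.

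There is no substantive obstacle here: the entire content of the theorem is the bijection between self-adjoint $\doublenontriv$-matrices modulo unitary similarity and real matrices modulo similarity, which the bracket calculus delivers in one line. The only subtlety worth flagging explicitly in the write-up is that "Jordan matrix" in the statement must be interpreted as a \emph{real} Jordan matrix (with $2 \times 2$ real blocks for complex-conjugate eigenvalue pairs), since $M$ is only assumed real; otherwise the appeal to the real Jordan theorem is what does all the work.
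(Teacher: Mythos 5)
Your proposal is correct and matches the paper's approach exactly: the paper carries out the same one-line computation $[P,P^{-1}][M,M][P,P^{-1}]^* = [PMP^{-1},PMP^{-1}]$ in the preceding discussion and then declares the proof "immediate." Your explicit note that "Jordan matrix" must mean the \emph{real} Jordan form is a helpful clarification that the paper leaves implicit.
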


The proofs are immediate.

\section{What we propose a spectrum theorem is for general $*$-algebras, and a conjecture}

In this section, we define what we think a spectral theorem ought to be, and pose a conjecture. A proof of this would be like a (sometimes purely qualitative and non-constructive) generalisation of many matrix decompositions. We begin by defining terms:

We recall some definitions related to monoids:

\begin{definition}
  A \emph{monoid} is the same notion as a group, but without the requirement of existence of inverses. An \emph{abelian monoid} is a monoid where the product is commutative. The product in an abelian monoid is usually written using the additive symbol $+$. A \emph{free abelian monoid generated by a set $S$} is the monoid whose underlying set is the set of functions of the form $f:S \to \mathbb N$ with finite support (i.e. where the set $\{x \in S \mid f(x) \neq 0\}$ is finite), where the monoid operation is $(f+g)(x) = f(x) + g(x)$. We define a \emph{subfree abelian monoid} to be a submonoid of a free abelian monoid.
\end{definition}

Let $(\mathcal A,*)$ be a $*$-algebra over $\mathbb R$.

\begin{definition}
  By the \emph{spectral monoid} of $(\mathcal A,*)$, which we denote $\overline{\operatorname{Her}}(\mathcal A,*)$, we mean the monoid ${\operatorname{Her}}(\mathcal A,*)/{\sim}$ where ${\operatorname{Her}}(\mathcal A,*)$ is the monoid formed from the set of Hermitian matrices over $(\mathcal A,*)$ with the monoid operation being direct sum of matrices $\oplus$, and $\sim$ denotes the equivalence relation \emph{unitary similarity}: $M \sim K \iff \exists\, \text{unitary matrix} \,U: UMU^* = K$.
\end{definition}

\begin{conjecture}[The spectral conjecture] \label{conjecture}
    Let $(\mathcal A,*)$ be an arbitrary associative, unital, finite-dimensional $*$-algebra over $\mathbb R$. We conjecture that the spectral monoid $\overline{\operatorname{Her}}(\mathcal A,*)$ is a subfree abelian monoid.
\end{conjecture}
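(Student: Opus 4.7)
The plan is to establish the conjecture via a Krull--Schmidt style theorem for Hermitian matrices under unitary similarity. First, observe that if every Hermitian matrix over $(\mathcal A,*)$ is unitarily similar to an orthogonal direct sum of \emph{indecomposable} Hermitians (those admitting no nontrivial orthogonal direct-sum decomposition), and if this decomposition is unique up to permutation and unitary similarity of the summands, then $\overline{\operatorname{Her}}(\mathcal A,*)$ is isomorphic to the free abelian monoid generated by the unitary-similarity classes of indecomposables, which is a fortiori subfree. Existence of such a decomposition is immediate by induction on matrix size, so the whole problem reduces to uniqueness.

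To attack uniqueness I would package the data into a category $\mathcal H(\mathcal A,*)$ whose objects are pairs $(V,\phi)$, with $V$ a finitely generated free right $\mathcal A$-module equipped with its standard Hermitian form $\langle \cdot, \cdot \rangle$ and $\phi : V \to V$ a self-adjoint $\mathcal A$-linear endomorphism; morphisms are $\mathcal A$-linear maps intertwining $\phi$ and preserving the form, so that isomorphism is precisely unitary similarity. There is an obvious forgetful functor from $\mathcal H(\mathcal A,*)$ to the category of finitely generated $\mathcal A[t]$-modules, with $t$ a central indeterminate acting as $\phi$. Since $V$ is finite-dimensional over $\mathbb R$, the endomorphism ring of $V$ as an $\mathcal A[t]$-module is a finite-dimensional (hence artinian) $\mathbb R$-algebra, and the classical Krull--Schmidt theorem yields a unique decomposition of $V$ into indecomposable $\mathcal A[t]$-submodules.

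The crux of the conjecture is therefore the \emph{lifting} step: given a module-theoretic decomposition of $V$, can one refine it to an \emph{orthogonal} decomposition with respect to $\langle \cdot, \cdot \rangle$, and are any two such orthogonal decompositions related by a unitary isomorphism? When the form restricted to each module-indecomposable summand is nondegenerate, this is essentially a (possibly indefinite) orthogonalisation procedure, and the classical spectral theorems over $\mathbb C$, $\mathbb R$, and $\mathbb H$ are special cases -- which is why in those settings module-theoretic and unitary classification coincide.

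The main obstacle, and where I expect the conjecture to become genuinely delicate, is the case in which $\mathcal A$ contains nilpotents and the form degenerates on some indecomposable module summand, so that orthogonal complements are no longer well-defined. There, a Witt-type cancellation theorem for Hermitian forms over $(\mathcal A,*)$ seems necessary, and its precise statement will be sensitive to the chosen involution. My overall strategy would be to reduce, via the radical filtration of $\mathcal A$ together with the Artin--Wedderburn decomposition of $\mathcal A/\operatorname{rad}(\mathcal A)$, to a finite list of atomic Morita-type building blocks, to handle each block by an explicit canonical form, and finally to glue these canonical forms back together into a global one -- which is essentially the strategy the paper appears to use in its subsequent treatment of the Clifford $*$-algebras $\cl_{p,q,0}(\mathbb R)$ and $\cl_{p,q,1}(\mathbb R)$, with the single-nilpotent-generator restriction likely reflecting precisely how far the Witt-type cancellation can currently be pushed.
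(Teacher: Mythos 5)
This statement is a conjecture which the paper itself does not prove; it only verifies it for a handful of specific Clifford $*$-algebras. Your proposal is therefore a strategy for an open problem, and the question is whether the strategy is sound.

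The central move — ``the whole problem reduces to uniqueness'' of the orthogonal decomposition into indecomposable Hermitians — is not a valid reduction, and the paper's own example $(\mathbb R \oplus \mathbb R, *_1)$ shows it cannot be. Over that $*$-algebra a Hermitian matrix is a pair $(H,K)$ of real symmetric matrices of the same size, and a unitary is a pair $(U,V)$ of orthogonal matrices acting independently on the two factors. The indecomposables are the $1\times 1$ matrices $(\lambda,\mu)\in\mathbb R^2$; but $(\operatorname{diag}(1,2),\operatorname{diag}(3,4))$ is unitarily similar both to $(1,3)\oplus(2,4)$ and to $(1,4)\oplus(2,3)$ (swap the basis in the second factor only), two orthogonal decompositions with distinct multisets of indecomposable summands. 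So Krull--Schmidt-type uniqueness genuinely fails, yet the paper asserts this spectral monoid is subfree: it embeds in $\overline{\operatorname{Her}}(\mathbb R,\operatorname{id})\times\overline{\operatorname{Her}}(\mathbb R,\operatorname{id})$ via $(H,K)\mapsto(\operatorname{spec}H,\operatorname{spec}K)$, with image the pairs of equal-cardinality multisets. Subfreeness is strictly weaker than unique factorization into atoms — compare the numerical monoid $\{0,2,3,4,\dots\}\subset\mathbb N$, where $6=2+2+2=3+3$ — so proving a Krull--Schmidt theorem is proving the wrong thing. A correct strategy would instead aim to exhibit an explicit embedding of $\overline{\operatorname{Her}}(\mathcal A,*)$ into a free abelian monoid, without insisting that the ambient generators themselves be realized by actual Hermitian matrices; equivalently, one would need cancellativity and torsion-freeness together with an embedding argument, not atomic uniqueness.

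Beyond this logical issue, the remainder of your sketch (the forgetful functor to $\mathcal A[t]$-modules, the lifting to orthogonal decompositions, the Witt-type cancellation) is a reasonable blueprint for the nondegenerate case, where it essentially recovers the classical $\mathbb R$, $\mathbb C$, $\mathbb H$ spectral theorems. But you leave the degenerate case — the one involving nilpotents in $\mathcal A$, which is precisely the interesting content of the paper — explicitly open. So even setting aside the invalid reduction, the proposal does not close the conjecture.
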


\begin{example}
  We refer to only the examples in section \ref{double-number-examples}. For $(\mathcal A,*) = \doublenontriv$, $\overline{\operatorname{Her}}(\mathcal A,*)$ is a free abelian monoid with the generators corresponding to real Jordan blocks. This captures the uniqueness of the Jordan Normal Form of an $\mathbb R$-matrix, up to permutation of the Jordan blocks. For $(\mathcal A,*) = \rrtriv$, $\overline{\operatorname{Her}}(\mathcal A,*)$ is only a subfree abelian monoid; it is in fact a submonoid of $\overline{\operatorname{Her}}(\mathbb R,\operatorname{id}_{\mathbb R})$.
\end{example}

\begin{remark}
  We will later show that the Singular Value Decomposition for the $*$-algebra $(\mathcal A, *)$ is implied by the spectral theorem for the $*$-algebra $(\mathcal B, \dagger)$ (called the ``SVD algebra'' for $(\mathcal A, *)$) where $\mathcal B$ is the result of adjoining an element $\delta$ to $\mathcal A \oplus \mathcal A$ (where $\oplus$ denotes direct sum of algebras), such that a general element of $\mathcal B$ is of the form $(x,y) + (y',x')\delta$, with: $\delta^2=0$, $\delta(x',y') = (y',x')\delta$, $\delta^\dagger = \delta$, $(x,y)^\dagger = (x^*,y^*)$. The spectral theorem for the enlarged $*$-algebra is more general than the SVD for the original $*$-algebra, but nevertheless provides insight into its SVD.
\end{remark}

\section{Introducing three $*$-algebras corresponding to the SVD, Takagi and skew-Takagi decompositions respectively}

\subsection{The ``SVD $*$-algebra''}

Consider the Clifford algebra $\cl_{1,0,1}(\mathbb R)$, which we will equip with a certain involution. To make the reader's life easier, we will describe this algebra explicitly. The elements of this algebra are all of the form
$$(a,b) + (a',b')\delta$$
where $a,b,a',b'$ are all real numbers. The two pairs $(a,b)$ and $(a',b')$ are double numbers, or numbers belonging to the algebra $\mathbb R \oplus \mathbb R$. The number $\delta$ on the other hand is quite exotic. First of all, $\delta$ satisfies $\delta^2 = 0$. Additionally, a number of the form $(a',b')\delta$ is essentially in its simplest form, but a number of the form $\delta(a',b')$ simplifies to $(b',a')\delta$. $\delta$ therefore acts as a swapping operator. The swapping operation here is essentially identical to the operation we defined as $*_{-1}$ over $\mathbb R \oplus \mathbb R$.

The algebra $\cl_{1,0,1}(\mathbb R)$ is still not a $*$-algebra because we have not equipped it with an involution. We will define an involution $*_1$ by $((a,b) + (a',b')\delta)^{*_1} = (a,b) + (b',a')\delta$. We will denote this more simply as $*$ unless this results in ambiguity. We will denote the corresponding $*$-algebra as \newcommand{\svdalg}{(\cl_{1,0,1}(\mathbb R),*_1)}$\svdalg$.

How should we write a matrix over $\svdalg$? We can write it as $(M,K)+(M',K')\delta$ with the obvious meaning. What is the adjoint operation over $\svdalg$? It is $((M,K)+(M',K')\delta)^* = (M^T,K^T) + ((K')^T,(M')^T)\delta$. What then is a $\svdalg$-unitary matrix? It is of the form $(U,V)(I + (K,-K^T)\delta)$ where $U$ and $V$ are orthogonal matrices. What is then a self-adjoint matrix? It is of the form $(H,K) + (M,M^T)\delta$ where $H=H^T$ and $K=K^T$. We can consider an \emph{infinitesimal} self-adjoint matrix to be of the form $(M,M^T)\delta$ because $\delta$ behaves like an infinitesimal.

We now consider the special case of the ``spectral theorem'' for only \emph{infinitesimal} self-adjoint matrices. This would be a canonical form for an infinitesimal self-adjoint matrix under unitary similarity. Consider an infinitesimal self-adjoint matrix $(M,M^T)\delta$ conjugated by a unitary matrix $(U,V)(I + (K,-K^T)\delta)$; this is written as $(U,V)(I + (K,-K^T)\delta) \times (M,M^T)\delta \times ((U,V)(I + (K,-K^T)\delta))^*$, and simplifies to $(U M V^T,V M^T U^T)\delta$. The canonical form must therefore be the same as the $\mathbb R$-singular value decomposition. A unification is thus achieved between a special case of the spectral theorem over $\svdalg$ (over only the infinitesimal self-adjoint matrices) and the singular value decomposition over $\mathbb R$. Note that the reasoning is valid with respect to any $*$-field in place of $\mathbb R$ (including $\mathbb C$).

Appendix remark: Note that we could have defined another involution $*_{-1}$ by $((a,b) + (a',b')\delta)^{*_{-1}} = (a,b) - (b',a')\delta$ and considered the $*$-algebra $(\cl_{1,0,1}(\mathbb R),*_{-1})$ instead. But this is in fact redundant because $(\cl_{1,0,1}(\mathbb R),*_{-1})$ is isomorphic to $\svdalg$. The isomorphism is given by $(a,b) + (a',b')\delta \mapsto (a,b) + (a',-b')\delta$, and works in either direction.

\begin{proposition}
    \label{svd-algl-iso}
    $\svdalg$ is isomorphic to $(\cl_{1,0,1}(\mathbb R),*_{-1})$.
\end{proposition}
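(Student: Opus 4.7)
The plan is to verify directly that the explicit map $\phi : \cl_{1,0,1}(\mathbb R) \to \cl_{1,0,1}(\mathbb R)$ defined by
\[
\phi\bigl((a,b) + (a',b')\delta\bigr) = (a,b) + (a',-b')\delta
\]
is a $*$-algebra isomorphism from $\svdalg$ to $(\cl_{1,0,1}(\mathbb R), *_{-1})$. There is no conceptual depth to the argument; everything reduces to a short routine calculation using the multiplication rules $\delta^2 = 0$ and $\delta(x',y') = (y',x')\delta$. I would organise this as four short steps.

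First, I would note that $\phi$ is $\mathbb R$-linear and a bijection (in fact an involution), since flipping the sign of the second coordinate of the $\delta$-part is self-inverse, and it preserves the unit $1 = (1,1)$ because it acts trivially on the non-$\delta$ part. Second, I would verify multiplicativity by deriving the general product formula in $\cl_{1,0,1}(\mathbb R)$:
\[
\bigl((a,b) + (a',b')\delta\bigr)\bigl((c,d)+(c',d')\delta\bigr) = (ac,bd) + (ac'+a'd,\, bd'+b'c)\delta,
\]
and then checking that replacing $(a',b')$ by $(a',-b')$ and $(c',d')$ by $(c',-d')$ on the left side reproduces exactly $\phi$ applied to the right side, i.e.\ negates the second coordinate of the $\delta$-part of the product. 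The two sign changes combine correctly because the $\delta$-part of the product is $\mathbb R$-linear in the coordinates $a',b',c',d'$ and each term carries exactly one primed factor, so the net effect is a single sign flip on the $b$-component.

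Third, I would check the intertwining of involutions, which is the key point. Computing directly,
\[
\phi\bigl(((a,b)+(a',b')\delta)^{*_1}\bigr) = \phi\bigl((a,b)+(b',a')\delta\bigr) = (a,b) + (b',-a')\delta,
\]
while
\[
\bigl(\phi((a,b)+(a',b')\delta)\bigr)^{*_{-1}} = \bigl((a,b)+(a',-b')\delta\bigr)^{*_{-1}} = (a,b) - (-b',a')\delta = (a,b)+(b',-a')\delta,
\]
so the two agree. Finally, since $\phi \circ \phi = \mathrm{id}$, the same $\phi$ is also the inverse isomorphism, which is what is meant by ``works in either direction.''

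I do not expect any genuine obstacle; the only thing to be slightly careful about is keeping track of which side the swap $(a',b') \mapsto (b',a')$ occurs on in each involution, and ensuring the sign produced by $*_{-1}$ cancels the sign introduced by $\phi$ on the $\delta$-part. Once the general product formula is in hand, both the multiplicativity check and the involution check are one line apiece.
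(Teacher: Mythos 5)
Your proof is correct and uses precisely the map the paper itself proposes, $(a,b) + (a',b')\delta \mapsto (a,b) + (a',-b')\delta$; the paper states this isomorphism without supplying the verification, and your four steps (linearity/bijectivity, multiplicativity via the explicit product formula, compatibility of involutions, self-inverseness) fill in exactly the routine checks that the paper leaves to the reader.
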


\subsection{The ``Takagi $*$-algebra''}

Consider the Clifford algebra $\cl_{0,1,1}(\mathbb R)$, which we will equip with a certain involution. To make the reader's life easier, we will describe this algebra explicitly. The elements of this algebra are all of the form
$$(a + b i) + (a' + b'i)\delta$$
where $a,b,a',b'$ are all real numbers. The two components $a + b i$ and $a' + b'i$ are complex numbers. The number $\delta$ on the other hand is quite exotic. First of all, $\delta$ satisfies $\delta^2 = 0$. Additionally, a number of the form $(a' + b' i)\delta$ is essentially in its simplest form, but a number of the form $\delta(a' + b' i)$ simplifies to $(a' - b' i)\delta$. $\delta$ therefore acts as a complex conjugation operator.

The algebra $\cl_{0,1,1}(\mathbb R)$ is still not a $*$-algebra because we have not equipped it with an involution. We will define an involution $*_1$ by $((a + b i) + (a' + b' i)\delta)^{*_1} = (a - b i) + (a' + b' i)\delta$. We will denote this more simply as $*$ unless this results in ambiguity. We will denote the corresponding $*$-algebra as \newcommand{\takalg}{(\cl_{0,1,1}(\mathbb R),*_1)}$\svdalg$.

How should we write a matrix over $\takalg$? We can write it as $M+K\delta$ with the obvious meaning. What is the adjoint operation over $\takalg$? It is $(M + K\delta)^* = M^* + K^T\delta$ where the $M^*$ denotes conjugate-transpose of a complex matrix. What then is a $\takalg$-unitary matrix? It is of the form $U(I + K\delta)$ where $U$ is complex unitary and $K$ is skew complex-symmetric. What is then a self-adjoint matrix over $\takalg$? It is of the form $H + S\delta$ where $H=H^*$ and $S = S^T$. We can consider an \emph{infinitesimal} self-adjoint matrix to be of the form $S\delta$ where $S$ is complex-symmetric because $\delta$ behaves like an infinitesimal.

We now consider the special case of the ``spectral theorem'' for only \emph{infinitesimal} self-adjoint $\takalg$-matrices. This would be a canonical form for an infinitesimal self-adjoint $\takalg$-matrix under unitary similarity. Consider an infinitesimal self-adjoint $\takalg$-matrix $S\delta$ (where $S = S^T$) conjugated by a $\takalg$-unitary matrix $U(I + K\delta)$; this is written as $U(I + K\delta) \times S\delta \times (U(I + K\delta))^*$, and simplifies to $U S U^T \delta$. The canonical form must therefore be the same as the Takagi decomposition. A unification is thus achieved between a special case of the spectral theorem over $\takalg$ (over only the infinitesimal self-adjoint matrices) and the Takagi decomposition for symmetric matrices over $\mathbb C$.

\subsection{The ``skew-Takagi $*$-algebra''}

Consider the same algebra $\cl_{0,1,1}(\mathbb R)$ as in the previous subsection, but with a different involution. Define $*_{-1}:\cl_{0,1,1}(\mathbb R) \to \cl_{0,1,1}(\mathbb R)$ by $(w + z\delta)^{*{-1}} = w^* - z^*\delta$, where in the previous definition we instead had $w^* + z^*\delta$. An infinitesimal self-adjoint matrix is now of the form $S \delta$ where $S$ is \emph{skew complex-symmetric} instead of \emph{complex-symmetric} as before (i.e. we need $S = -S^T$). Applying a unitary similarity to it simplifes to $U S U^T\delta$ where $U$ is some complex unitary matrix. The spectral theorem for infinitesimal self-adjoint matrices over \newcommand{\skewtakalg}{(\cl_{0,1,1}(\mathbb R), *_{-1})}$\skewtakalg$ is thus the same as the skew-Takagi decomposition.

\subsection{Remark about dualities between matrix decompositions}

The above is surprising because the Takagi decomposition is usually seen as a special case of the SVD. What we've uncovered though is instead a duality between them. Takagi is not a special case of SVD, but is its dual. This duality suggests that somehow, whatever ``works'' (let's say in numerical computing) for the SVD should also work over Takagi. Since the SVD is more thoroughly studied, this suggests that one can transfer the numerical theory of the SVD wholesale onto the Takagi and skew-Takagi decompositions.

The skew-Takagi decomposition is obviously dual to the Takagi decomposition in a different way to how it is dual to the SVD.

Notice that when diagonalising a self-adjoint matrix $H$ over $\mathbb C$, the following steps are taken:
\begin{enumerate}
    \item Find an eigenvector $v$ of $H$.
    \item Find a basis $B$ for the orthogonal complement of $v$, written $v^\perp$.
    \item Restrict $H$ to $v^\perp$ by using the basis $B$.
    \item Return to the first step.
\end{enumerate}
The trick is to realise that the SVD and Takagi follow the same plan. In the case of the SVD, given an arbitrary $\mathbb R$-matrix $M$, the matrix $(M,M^T)\delta$ is a self-adjoint matrix in $\svdalg$. A left and right singular vector of $M$, when paired together as $(u,v)$, is actually an eigenvector of $(M,M^T) \delta$. Conversely, an eigenvector of $(M,M^T)\delta$ is immediately reducible to the $\svdalg$-vector $(u,v)$ where $u$ and $v$ are singular vectors of $M$. Steps 2 and 3 are simple. In step 2, the orthogonal complement of $(u,v)$ is the Cartesian product of the orthogonal complements of $u$ and $v$. In symbols: $(u,v)^\perp = u^\perp \times v^\perp$. Thus, given an ordered basis $(b_1, b_2, \dotsc, b_n)$ of $u^\perp$ and an ordered basis $(c_1, c_2, \dotsc, c_n)$ of $v^\perp$, we can obtain an ordered basis $((b_1,c_1), (b_2,c_2), \dotsc, (b_n,c_n))$ of $(u,v)^\perp$.

\newcommand{\unwind}{\operatorname{unwind}}

Step 1 is usually the most complicated, but there are approaches which sometimes work, depending on the $*$-algebra. A good approach can be called the \emph{unpack-and-unwind} method (for lack of a better name). Given a $*$-algebra $(\mathcal A, *)$, and a sub-$*$-algebra $\mathcal B$, we define a function ``$\unpack$'' that sends a $(\mathcal A, *)$-matrix to a $(\mathcal B,*)$-matrix. For example, consider the matrix $(M,M^T)\delta$ over $\svdalg$, and let $\mathcal B$ be the dual numbers. We have that $\unpack((M,M^T)\delta) = \begin{pmatrix} 0 & M \varepsilon\\ M^T\varepsilon & 0\end{pmatrix}$. Ignoring the $\varepsilon$, we can easily obtain an eigenvector $v$ of $\unpack((M,M^T)\delta)$ using the $\mathbb R$-spectral theorem. Notice though that an eigenvector of $\unpack((M,M^T)\delta)$ is not an eigenvector of $(M,M^T)\delta$. We define another function called $\unwind$. $\unwind$ sends an $\mathcal A$-vector to a $\mathcal B$-vector in such a way that $\unwind(Kv) = \unpack(K) \unwind(v)$. For our eigenvector $v$ of $\unpack((M,M^T)\delta)$, we can verify that $v = \unwind(v')$ for some $\svdalg$-vector $v'$. We then have that $\unwind(\lambda v') = \lambda \unwind(v') = \unpack((M,M^T)\delta) \unwind(v') = \unwind((M,M^T)\delta v')$. By the injectivity of $\unwind$, we cancel to get that $v'$ is an eigenvector of $(M,M^T)\delta$.

Note that all of this is \emph{not} the same thing as reducing SVD or Takagi to the diagonalisation of some suitable self-adjoint $\mathbb C$-matrix. These tricks are well-known, suboptimal, and they require additional post-processing steps which the method we're proposing doesn't.

\section{Infinitesimal spectral theorems, and the resulting equivalence relations which give rise to numerous matrix decompositions}

\subsection{Understanding $\delta$ in general}

The algebras above -- that is, $\cl_{0,1,1}(\mathbb R)$ and $\cl_{1,0,1}(\mathbb R)$, and without considering involutions -- could be obtained from $\mathbb C (\cong \cl_{0,1,0}(\mathbb R))$ or $\mathbb R \oplus \mathbb R (\cong \cl_{1,0,0}(\mathbb R))$ respectively in multiple ways. On the one hand, they could be obtained by introducing a nilsquare generator $\delta$ into the Clifford algebra to obtain a larger Clifford algebra.

A broader direction of generalisation is also apparent: We introduced an element $\delta$ into some algebra $A$ over some field $\mathbb F$ such that $\delta z = \phi(z) \delta$ where $\phi:A \to A$ is some function. This is a lot like the Cayley-Dickson construction. What properties should $\phi$ satisfy?

To have associativity, we will need to have $\phi(wz) = \phi(w) \phi(z)$. Why? Because by associativity, we have that $\phi(wz)\delta = \delta(wz) = (\delta w) z = (\phi(w) \delta) z = \phi (w) (\delta z) = \phi(w) (\phi(z) \delta) = (\phi(w) \phi(z)) \delta$. This already rules out the standard quaternion involution, or the matrix transpose, as possible instantiations of $\phi$.

$\phi$ will need to be linear over the underlying field. Why? Because $\phi(w + \lambda z)\delta = \delta(w + \lambda z) = \delta w + \delta \lambda z = \phi(w)\delta + \lambda \phi(z) \delta = (\phi(w) + \lambda \phi(z))\delta$ where $\lambda$ is in the field.

Notice though that it would not suffice for $\phi$ to be an antiautomorphism. The usual quaternion involution is actually an antiautomorphism and not an automorphism -- as is the matrix transpose -- and so could not be used as a $\phi$.

We can now state:

\begin{proposition}
    Let $A$ be an algebra over a field $\mathbb F$. Let $A[[\delta]]$ denote the algebra obtained by adjoining to $A$ an element $\delta$ such that $\delta^2 = 0$ and $\delta z = \phi(z) \delta$ for some function $\phi: A \to A$ and all $z \in A$. The result is an associative algebra over $\mathbb F$ if and only if $\phi$ is an algebra automorphism of $A$.
\end{proposition}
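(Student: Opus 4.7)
The plan is to pin down a concrete description of $A[[\delta]]$ and then test both directions of the proposition against it. Every element admits a unique expression $a + b\delta$ with $a, b \in A$, and distributivity combined with $\delta^2 = 0$ and $\delta z = \phi(z)\delta$ forces the single multiplication rule $(a + b\delta)(c + d\delta) = ac + (ad + b\phi(c))\delta$. Both the ``$\Rightarrow$'' and ``$\Leftarrow$'' directions reduce to direct calculation with this one formula.

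For the ``$\Rightarrow$'' direction, I would assume $A[[\delta]]$ is associative and reuse the computations already rehearsed in the excerpt: $(\delta w)z = \delta(wz)$ collapses to $\phi(w)\phi(z)\delta = \phi(wz)\delta$ and hence $\phi$ is multiplicative; distributivity of multiplication by $\delta$ over sums gives additivity; compatibility of $\delta$ with scalars gives $\mathbb F$-linearity; and $\delta = \delta \cdot 1 = \phi(1)\delta$ forces $\phi(1) = 1$. Packaging these together shows $\phi$ is a unital $\mathbb F$-algebra endomorphism. For the ``$\Leftarrow$'' direction, I would assume $\phi$ is a unital $\mathbb F$-algebra homomorphism, expand both $((a+b\delta)(c+d\delta))(e+f\delta)$ and $(a+b\delta)((c+d\delta)(e+f\delta))$ by the multiplication formula, and compare the $\delta$-free and $\delta$-coefficient parts. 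The only non-trivial match is $b\phi(c)\phi(e) = b\phi(ce)$, which holds by multiplicativity; all other summands agree by associativity in $A$ alone.

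The main obstacle is the word ``automorphism'' in the statement. Strictly, the calculation above only forces $\phi$ to be a unital $\mathbb F$-algebra endomorphism, not a bijection: for instance $A = \mathbb R \oplus \mathbb R$ with $\phi(a,b) = (a,a)$ is a unital homomorphism that is not surjective, yet the construction $A[[\delta]]$ is still associative (direct verification). To land exactly on ``automorphism'' I would either soften the conclusion to ``endomorphism'', or add a non-degeneracy axiom to the construction, for example that no nonzero element of $A$ annihilate $\delta$ on the left. That condition is equivalent to $\phi$ being injective and, in the finite-dimensional setting fixed in the paper's preliminaries, equivalent to $\phi$ being bijective, which would then upgrade ``endomorphism'' to ``automorphism'' as claimed.
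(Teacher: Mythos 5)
Your argument is correct and, for the ``only if'' direction, follows exactly the calculation the paper sketches in the paragraphs preceding the proposition; the paper does not write out the ``if'' direction, but it is the routine expansion you describe, with the single nontrivial identification $b\phi(c)\phi(e)=b\phi(ce)$.

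You have also put your finger on a genuine overstatement in the proposition itself. The associativity and scalar-compatibility computations force $\phi$ to be a unital $\mathbb F$-algebra \emph{endomorphism}, and nothing in the construction forces it to be bijective; your example $A=\mathbb R\oplus\mathbb R$, $\phi(a,b)=(a,a)$ is indeed a non-surjective unital endomorphism for which $A[[\delta]]$, with multiplication $(a+b\delta)(c+d\delta)=ac+(ad+b\phi(c))\delta$, is still a well-defined associative unital $\mathbb F$-algebra. The paper's surrounding discussion is careful to rule out \emph{anti}automorphisms (quaternion conjugation, matrix transpose) but silently assumes bijectivity. One small slip in your proposed repair: the non-degeneracy condition you want is that no nonzero $z\in A$ annihilates $\delta$ \emph{on the right}, since $\delta z=\phi(z)\delta$ vanishes exactly when $\phi(z)=0$; left annihilation $z\delta=0$ is automatically impossible in the $A\oplus A\delta$ model. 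With that correction, your remark that right-non-degeneracy is equivalent to injectivity of $\phi$, hence (by finite-dimensionality) to bijectivity, is the right way to rescue the word ``automorphism.'' In any case the discrepancy is harmless for the paper: every $\phi$ actually used there (complex conjugation, the swap on $\mathbb R\oplus\mathbb R$, $\varepsilon\mapsto-\varepsilon$) is an involution and therefore an automorphism.
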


\subsection{The unifying list}

Each matrix decomposition we consider presents a canonical form for a matrix under some equivalence relation. In our statement of conjecture \ref{conjecture}, we speculate that this canonical form has a particularly nice structure for the general type of equivalence relation we consider here.

Some matrix decompositions are precisely equivalent to spectral theorems over $*$-algebras. We've seen this with the spectral theorem for $\doublenontriv$, which is precisely equivalent to the Jordan decomposition. But oftentimes, the equivalence only holds for so-called \emph{infinitesimal matrices}, with the non-infinitesimal spectrum theorem being strictly more general than is needed to describe a classic matrix decomposition.

Given a $*$-algebra $(R,*)$, an endomorphism $\phi: R \to R$, and a value $s \in \{-1,+1\}$, we can define the $*$-algebra $R[[\delta]]$ whose elements are of the form $w + z \delta$ ($w, z \in R$), where $\delta z = \phi(z) \delta$, and where $(w + z \delta)^* = w^* + s \phi(z^*) \delta$.

A matrix $M$ over the $*$-algebra $R[[\delta]]$ is called \emph{infinitesimal} if it is equal to $K \delta$ for some $(R,*)$-matrix $K$. An \emph{infinitesimal self-adjoint matrix} $H$ is one which is infinitesimal and self-adjoint.

The unitary-similarity relation specialised to infinitesimal self-adjoint matrices is equivalent (depending on $R, *, \phi, s$) to numerous equivalence relations on $\mathbb R$-matrices, with a likely spectral theorem for each of them:

\begin{itemize}
    \item Ordinary matrix similarity, whose canonical form is the Jordan decomposition, if $R = \mathbb R \oplus \mathbb R$, $(a,b)^* = (b,a)$, $s = 1$ and $\phi(a,b) = (a,b)$.
    \item The equivalence relation $M \sim U M V^T$ (for $M$ real, and $U$ and $V$ real-unitary), whose canonical form is the SVD, if $R = \mathbb R \oplus \mathbb R$, $(a,b)^* = (a,b)$, $s = 1$ and $\phi(a,b) = (b,a)$.
    \item The equivalence relation $M \sim V M V^T$ for $M$ complex-symmetric and $V$ complex-unitary, whose canonical form is the Takagi decomposition, if $R = \mathbb C$, $(a + bi)^* = a - bi$, $s = 1$ and $\phi(a + bi) = a - bi$.
    \item The equivalence relation $M \sim V M V^T$ for $M$ complex skew-symmetric and $V$ complex-unitary, whose canonical form is the skew-Takagi decomposition if $R = \mathbb C$, $(a + bi)^* = a - bi$, $s = -1$ and $\phi(a + bi) = a - bi$.
    \item Consider four symmetric bilinear forms $B_1, B'_1, B_2, B'_2 : V \otimes V \to \mathbb R$, the first two being non-degenerate, over an $\mathbb R$-vector space $V$. We say that $(B_1, B_2)$ is equivalent to $(B'_1,B'_2)$ if there is an invertible linear operator $P : V \to V$ such that for all $i \in \{1,2\}$ we have $B_i(P(u),P(v)) = B'_i(u,v)$. This equivalence relation occurs when $R = \mathbb R \oplus \mathbb R$, $(a,b)^* = (b,a)$, $s = 1$ and $\phi(a,b) = (b,a)$. We don't consider this one further.
    \item The equivalence relation $M \sim V M V^T$ for $M$ dual-number symmetric and $V$ dual-number unitary, whose canonical form is a dual-Takagi decomposition, if $R = \mathbb D$, $(a + b\varepsilon)^* = a + b\varepsilon$, $s = 1$ and $\phi(a + b\varepsilon) = a - b\varepsilon$.
\end{itemize}

There are other special cases of spectral theorems over $*$-algebras that are equivalent to other matrix decompositions.

\section{The unpack-and-unwind method for computing and verifying existence of some classic matrix decompositions}

The following theorems aren't new, but are proved using the same technique. These are special cases of spectral theorems which we will prove fully later. Note that by $\mathbb C$, we will mean the $*$-algebra of complex numbers equipped with their standard involution: $(a + bi)^* = a - bi$. We won't write this $(\mathbb C, *_{-1})$ for the sake of readability, but be aware that the complex numbers may (outside of this paper) be equipped with a different involution.

Below, we piecemeal define operations we call $\unpack$ and $\unwind$. The $\unwind$ operation acts on column vectors, and is at least \emph{partially} a mere change of scalars. The $\unpack$ operation is then defined such that $\unpack(M)\unwind(v) = \unwind(M v)$ for all $v$. We say that $\unwind$ is only partially a change of scalars, because it changes the scalar $*$-algebra to one of its subalgebras, but in such a way that for instance the ``length squared'' of a vector might be preserved.

For example, the following would be a \emph{bad way} to define $\unwind$ from $\mathbb C$-vectors to $\mathbb R$-vectors: $\unwind(u + v i) = (u + v, u)^T$. The problem with this definition is that if $z = u + v i$, then $z^* z \neq \unwind(z)^* \unwind(z)$ according to this proposal.

In some instances below, we unfortunately cannot define $\unwind$ such that $v^* v = \unwind(v)^* \unwind(v)$, but a seemingly natural definition of $\unwind$ is still possible, and we still give one. Such difficulties arise in any ring which contains nontrivial idempotents (i.e. solutions to $x^2 = x$ which are not either $0$ or $1$): For example, in the ring $\mathbb R \oplus \mathbb R$.

In general, the $\unwind$ and $\unpack$ operations ought to satisfy:
$$\begin{aligned}
  \unwind(u + v) &= \unwind(u) + \unwind(v) \\
  \unwind(A v) &= \unpack(A) \unwind(v) \\
  \unpack(A B) &= \unpack(A) \unpack(B) \\
  \unpack(A^*) &= \unpack(A)^*\\
  \text{$\unwind$ is bijective}
\end{aligned}$$

\begin{proposition}[Takagi decomposition] \label{takagi}
    Given a complex-symmetric matrix $M$ (i.e. one which satisfies $M = M^T$, and not to be confused with a self-adjoint $\mathbb C$-matrix) there exists a $\mathbb C$-unitary matrix $U$ and an $\mathbb R$-diagonal matrix $D$ such that $M = UDU^T$. (Note that $U^T \neq U^{-1}$).
\end{proposition}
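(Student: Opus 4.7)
The plan is to use the unpack-and-unwind method to reduce the Takagi decomposition to the complex spectral theorem, with $\takalg$ as the organising framework. Specifically, I would first form $M\delta$, which is infinitesimal self-adjoint in $\takalg$ since $M = M^T$. The identity from the Takagi subsection, $V(M\delta)V^* = UMU^T\delta$ for any $\takalg$-unitary $V = U(I + K\delta)$, shows that producing a unitary $V$ and a real diagonal $D$ with $V(M\delta)V^* = D\delta$ is equivalent to the Takagi decomposition of $M$ (after relabelling the complex unitary factor). So it suffices to build a basis of right-eigenvectors for $M\delta$.

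\textbf{Unpack to a Hermitian matrix.} Following the SVD template from Section~4.4, I would embed $\takalg$ into $2 \times 2$ matrices over $\mathbb C[\varepsilon]$ via $w + z\delta \mapsto \begin{pmatrix} w & \varepsilon z \\ \varepsilon \bar z & \bar w \end{pmatrix}$, which is a $*$-homomorphism (with $\bar\varepsilon = \varepsilon$) and sends $M\delta$ to $\varepsilon \cdot H$, where $H = \begin{pmatrix} 0 & M \\ \bar M & 0 \end{pmatrix}$. Because $M = M^T$ forces $M^* = \bar M$, the underlying matrix $H$ is Hermitian. The complex spectral theorem then supplies an orthonormal eigenbasis of $H$ with real eigenvalues occurring in $\pm\sigma$ pairs. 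The antilinear involution $T : \begin{pmatrix} x \\ y \end{pmatrix} \mapsto \begin{pmatrix} \bar y \\ \bar x \end{pmatrix}$ commutes with $H$ (this step uses $M = M^T$), so within each $+\sigma$ eigenspace I can pick a basis of $T$-fixed vectors, necessarily of the form $\begin{pmatrix} u_k \\ \bar u_k \end{pmatrix}$; each such vector encodes a Takagi singular vector $u_k$ satisfying $M\bar u_k = \sigma_k u_k$.

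\textbf{Orthonormality and assembly.} A short calculation using $M = M^T$ together with the Takagi equations for $u_k, u_l$ gives $\sigma u_l^* u_k = \sigma u_l^T \bar u_k$, hence $u_l^* u_k \in \mathbb R$ whenever $\sigma_k = \sigma_l \neq 0$. Consequently any real-orthonormal basis (with respect to $\Re\langle\cdot,\cdot\rangle$) of the real Takagi subspace at each singular value is simultaneously complex-orthonormal. Collecting the $u_k$ as the columns of $U$ yields $M\bar U = UD$ with $D$ real diagonal, which rearranges to $M = UDU^T$ via $\bar U^{-1} = U^T$.

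\textbf{Main obstacle.} The subtle step is extracting complex-orthonormal Takagi vectors inside a degenerate $+\sigma$ eigenspace of $H$: a priori that eigenspace only carries an antilinear symmetry alongside the usual complex inner product, and nothing visibly forces its $T$-fixed vectors to be complex-orthogonal to each other. The reality of $u_l^* u_k$ for Takagi vectors sharing a singular value is exactly what unlocks the construction, promoting a real Gram--Schmidt in the $T$-fixed subspace to a complex-orthonormalisation. Once this is in hand, the spectral recursion on orthogonal complements (sketched in Section~4.4) completes the argument.
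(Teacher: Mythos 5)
Your argument is correct, but it takes a genuinely different route from the paper's. You unpack $\takalg$ into $2\times 2$ blocks over $\mathbb{C}[\varepsilon]$, obtaining the Hermitian matrix $H=\begin{pmatrix}0 & M\\ \bar M & 0\end{pmatrix}$, and then recover the Takagi structure by restricting to $T$-fixed eigenvectors of the antilinear involution $T:(x,y)\mapsto(\bar y,\bar x)$ and proving the reality lemma $u_l^*u_k\in\mathbb{R}$ to upgrade real orthogonality to complex orthogonality. This is, in effect, the classical Autonne/Horn--Johnson argument transported into the $\takalg$ framework. The paper instead defines $\unpack$ to land directly in $\mathbb{R}[\varepsilon]$-matrices, via $\unpack(A+Bi+C\delta+Di\delta)=\begin{pmatrix}A-C\varepsilon & -B+D\varepsilon\\ B+D\varepsilon & A+C\varepsilon\end{pmatrix}$, which converts $M\delta$ into a real symmetric matrix (scaled by $\varepsilon$). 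Applying the $\mathbb{R}$-spectral theorem then produces real eigenvectors, and because the unwind map $\begin{pmatrix}a\\b\end{pmatrix}\mapsto a+bi$ has the conjugation symmetry already hard-wired into its domain, these real eigenvectors unwind directly to Takagi vectors of $M\delta$ with no antilinear-involution step and no separate orthogonality lemma. In short: the paper pays up front by choosing a smaller (real) codomain for $\unpack$ and collects the Takagi symmetry for free; you choose the larger (complex) codomain and then have to re-extract that symmetry by hand, which is exactly where your ``main obstacle'' about degenerate $+\sigma$ eigenspaces comes from. One small gap in your write-up: the reality of $u_l^*u_k$ is proved only for $\sigma_k=\sigma_l\neq 0$, so the $\sigma=0$ case (where one should pick a $\mathbb{C}$-orthonormal basis of $\overline{\ker M}$ directly and check that the resulting $T$-fixed pairs still span the null eigenspace over $\mathbb{C}$) needs a sentence of its own rather than being folded into the same Gram--Schmidt step.
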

\begin{proof}
    Observe that while $M$ is not Hermitian, we may introduce a new scalar $\delta$ such that $\delta^2 = 0, \delta i = -i \delta$ and $\delta^* = \delta$. We see that $M\delta$ is now a $\takalg$-matrix, and is indeed self-adjoint (over $\takalg$) because $(M\delta)^* = \delta^* M^* = \delta \overline M^T = M^T \delta = M \delta$. We might then hope to unitarily diagonalise $M\delta$. We seek to show that a $\takalg$-unitary diagonalisation of $M \delta$ -- should it exist -- gives rise to a Takagi decomposition of $M$. To see this, we begin with assuming that a $\takalg$-unitary diagonalisation exists: $M \delta = U D U^*$ for $\takalg$-matrices $U$ and $D$ where $U$ is $\takalg$-unitary and $D$ is $\takalg$-diagonal. We write each of $U$ and $D$ as:
    $$\begin{aligned}
        U = U_0 + U' \delta \\
        D = D_0 + D' \delta
    \end{aligned}$$
    where the four matrices $U_0, U', D_0, D'$ are all complex matrices, and $U_0$ is $\mathbb C$-unitary. Substituting these into $M \delta = U D U^*$ and simplifying gives $M \delta = U_0 D' U_0^T \delta$. Taking the $\delta$ component finally gives $M = U_0 D' U_0^T$. This is a Takagi decomposition of $M$. Therefore, our problem is reduced to finding the unitary diagonalisation of $M \delta$.
    
    We define $\unpack(A + Bi + C\delta + D i \delta) := \begin{pmatrix}A - C \varepsilon & -B + D\varepsilon \\ B + D \varepsilon & A + C \varepsilon \end{pmatrix}$ (where $A, B, C, D$ are arbitrary $\mathbb R$-matrices of equal dimensions) and $\unwind(a + bi + c\delta + d i \delta) := \begin{pmatrix}a + c \epsilon \\ b + d \varepsilon \end{pmatrix}$ (where $a, b, c, d$ are $\mathbb R$-vectors of equal dimensions). We observe that $\unpack(MK) = \unpack(M) \unpack(K)$ and $\unwind(Mv) = \unpack(M) \unwind(v)$.
    
    We have that $\unpack(M \delta) = \begin{pmatrix} -\Re(M) & \Im(M) \\ \Im(M) & \Re(M)\end{pmatrix} \varepsilon$. Ignoring the $\varepsilon$, we have a self-adjoint $\mathbb R$-matrix. By the $\mathbb R$-spectral theorem, we obtain an $\mathbb R$-eigenvector $v$ of $\unpack(M \delta)$. Unfortunately, $v$ is not an eigenvector of $M \delta$, but only of $\unpack(M \delta)$. By the expression for $\unwind$ above, we have that there is a $\mathbb C$-vector $v'$ such that $\unwind(v') = v$. We see that $v'$ is an eigenvector of $M \delta$.

    Since $M \delta$ is self-adjoint, it maps the orthogonal complement of $v'$ to itself. We can define a matrix representation of $M \delta$ over this subspace. Since $v'$ is a $\mathbb C$-vector, a $\takalg$-orthonormal basis $B$ for $(v')^\perp$ is obtained simply from the $\mathbb C$-orthogonal complement of $v'$. The matrix representation of $M \delta$ over $(v')^\perp$ is obtained using the basis $B$. The result is a self-adjoint $\takalg$-matrix $M' \delta$ with one less dimension than $M \delta$. We repeat by finding another eigenvector, restricting to the orthogonal complement of \emph{that}, etc. Formally, this proof is by induction.
\end{proof}

\begin{proposition}[Singular Value Decomposition] \label{svd}
    Given a square $\mathbb R$-matrix $M$ there exists a pair of $\mathbb R$-unitary matrices $U$ and $V$, and an $\mathbb R$-diagonal matrix $D$, such that $UDV^T = M$.
\end{proposition}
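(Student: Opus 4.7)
The plan is to mirror the proof of Proposition \ref{takagi} essentially verbatim, but with the Takagi $*$-algebra replaced by $\svdalg$. Given a square real matrix $M$, form the infinitesimal self-adjoint $\svdalg$-matrix $(M, M^T)\delta$. Granted an $\svdalg$-unitary diagonalisation $(M, M^T)\delta = UDU^*$, I would write $U = (U_0, V_0)(I + (K, -K^T)\delta)$ with $U_0, V_0$ real orthogonal and $D = (D_1, D_1)\delta$ with $D_1$ real diagonal, as dictated by the already-established structure of $\svdalg$-unitary matrices and of infinitesimal self-adjoint $\svdalg$-diagonal matrices. Direct computation, using $\delta^2 = 0$ to annihilate several terms, simplifies $UDU^*$ to $(U_0 D_1 V_0^T,\, V_0 D_1 U_0^T)\delta$, and extracting the first $\delta$-component yields $M = U_0 D_1 V_0^T$, which is the desired SVD of $M$.

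The substance of the proof is thus producing the unitary diagonalisation of $(M, M^T)\delta$, which I would carry out by the $\unpack$-and-$\unwind$ method and an induction on dimension. Over the dual numbers $\mathbb D = \mathbb R[\varepsilon]/(\varepsilon^2)$, define
\[
  \unpack\bigl((A, B) + (C, D)\delta\bigr) = \begin{pmatrix} A & C\varepsilon \\ D\varepsilon & B \end{pmatrix}, \qquad
  \unwind\bigl((p, q) + (p', q')\delta\bigr) = \begin{pmatrix} p + p'\varepsilon \\ q + q'\varepsilon \end{pmatrix}.
\]
A routine calculation, entirely analogous to the one in the proof of Proposition \ref{takagi}, verifies that $\unpack$ is a $*$-homomorphism and that $\unwind(Kw) = \unpack(K)\unwind(w)$. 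For our particular matrix, $\unpack\bigl((M, M^T)\delta\bigr) = \bigl(\begin{smallmatrix} 0 & M \\ M^T & 0 \end{smallmatrix}\bigr)\varepsilon$; dropping the scalar factor of $\varepsilon$ reveals the classical real symmetric block matrix, to which the ordinary $\mathbb R$-spectral theorem applies. This supplies a real unit eigenvector $(u, v)^T$ with real eigenvalue $\sigma$, equivalently, a pair $u, v$ satisfying $Mv = \sigma u$ and $M^T u = \sigma v$. Passing through $\unwind^{-1}$ then yields an $\svdalg$-vector $w = (u, v)$ satisfying the right-eigenvalue relation $(M, M^T)\delta \cdot w = w \cdot (\sigma\delta)$, with self-adjoint infinitesimal scalar eigenvalue $\sigma\delta$.

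To close the induction, I would use self-adjointness to conclude that $(M, M^T)\delta$ preserves the $\svdalg$-orthogonal complement $w^\perp$; since $w$'s entries are $\delta$-free, $w^\perp$ decomposes canonically as $u^\perp \times v^\perp$, and any pair of real orthonormal bases of $u^\perp$ and $v^\perp$ glues coordinate-wise into an $\svdalg$-orthonormal basis. A short calculation shows that the restricted matrix has the same form $(M', (M')^T)\delta$ with $M' = B^T M C$ for the chosen basis matrices $B, C$, so the induction proceeds on a matrix of strictly smaller size. The main obstacle I expect is purely notational bookkeeping around the one-sidedness of the eigenvalue equation: because $\svdalg$-scalars do not commute past $\delta$, the vectors produced by $\unwind^{-1}$ are \emph{right} eigenvectors with infinitesimal eigenvalues of the shape $\sigma\delta$, and this convention must be maintained consistently throughout. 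Once that distinction is pinned down, the argument runs in exact parallel with the Takagi case.
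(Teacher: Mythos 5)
Your proposal is correct and follows the paper's proof essentially verbatim: form the infinitesimal self-adjoint $\svdalg$-matrix $(M,M^T)\delta$, reduce the SVD to its unitary diagonalisation, $\unpack$ to a block dual-number matrix $\begin{pmatrix}0 & M\\M^T & 0\end{pmatrix}\varepsilon$, extract a real eigenvector by the $\mathbb R$-spectral theorem, pull it back through $\unwind$, and induct on the orthogonal complement. The one place you deviate from the printed proof is the $\delta$-component of $\unwind$ (you write $\begin{pmatrix}p+p'\varepsilon\\ q+q'\varepsilon\end{pmatrix}$ with no swap, the paper writes $\begin{pmatrix}a+d\varepsilon\\ b+c\varepsilon\end{pmatrix}$ with a swap); a direct check of $\unwind(Kv)=\unpack(K)\unwind(v)$ against the paper's own $\unpack$ shows your version is the one that actually satisfies the required identity, so this appears to be a typo in the paper rather than an error on your part.
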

\begin{remark}
    Before we begin the proof, we provide a justification for providing a new proof of the singular value decomposition, other than that we can use the same method for other decompositions. The approach usually taught to students for proving or computing the SVD, which revolves around diagonalising $M^T M$ does \emph{not work over the ring of dual numbers}. This is because the eigenvalues of $M^T M$ are the squares of the singular values of $M$. The singular values of a dual number matrix may square to zero without actually being zero. This provides a simplified model of numerical underflow in inexact floating point or fixed point computation. While our method below uses a well-known block matrix $\begin{pmatrix}0 & M \\ M^T & 0 \end{pmatrix}$ whose singular values are plus-and-minus the singular values of $M$, the argument below succeeds in building an existence-of-SVD proof around it, where we ensure that the unitary diagonalisation we obtain has the block structure we need. For the dual-number spectral theorem(s) and initial proofs of the corresponding SVD(s), see \cite{gutin_generalizations_2021}. Note that we say ``(s)'' because there is a separate dual-number spectral theorem (and SVD) for each of the two possible involutions over the dual numbers.
\end{remark}
\begin{proof}
    Observe that $(M, M^T) \delta$ is self-adjoint over $\svdalg$. We might then reasonably hope to obtain a unitary diagonalisation of $(M,M^T)\delta$. We seek to show that a $\svdalg$-unitary diagonalisation of $(M, M^T) \delta$ -- should it exist -- gives rise to a singular value decomposition of $M$. To see this, we begin with assuming that a $\svdalg$-unitary diagonalisation exists: $(M,M^T) \delta = W D W^*$ for $\svdalg$-matrices $W$ and $D$ where $W$ is $\svdalg$-unitary and $D$ is $\svdalg$-diagonal. We write each of $W$ and $D$ as:
    $$\begin{aligned}
        W = (U,V) + (U',V') \delta \\
        D = (E,F) + (\Sigma, \Sigma') \delta
    \end{aligned}$$
    where the eight matrices $U, V, U', V', E, F, \Sigma, \Sigma'$ are all real matrices, and the two matrices $U$ and $V$ are $\mathbb R$-unitary. Substituting these into $(M,M^T) \delta = W D W^*$ and simplifying gives $\Sigma' = \Sigma^T$ and $(M,M^T) \delta = (U \Sigma V^T, V \Sigma U^T) \delta$. Taking the $\delta$ component finally gives $(M,M^T) = (U \Sigma V^T, V \Sigma U^T)$, which implies $M = U \Sigma V^T$. This is an SVD of $M$. Therefore, our problem is reduced to finding the unitary diagonalisation of $(M,M^T) \delta$.
    
    We define $\unpack((A,B) + (C,D)\delta) := \begin{pmatrix}A & C \varepsilon \\ D \varepsilon & B \end{pmatrix}$ (where $A, B, C, D$ are arbitrary $\mathbb R$-matrices of equal dimensions) and $\unwind((a,b) + (c,d) \delta) := \begin{pmatrix}a + d \varepsilon \\ b + c \varepsilon \end{pmatrix}$ (where $a, b, c, d$ are $\mathbb R$-vectors of equal dimensions). We observe that $\unpack(MK) = \unpack(M) \unpack(K)$ and $\unwind(Mv) = \unpack(M) \unwind(v)$.
    
    We have that $\unpack((M,M^T) \delta) = \begin{pmatrix} 0 & M \\ M^T & 0\end{pmatrix} \varepsilon$. Ignoring the $\varepsilon$, we have a self-adjoint $\mathbb R$-matrix. By the $\mathbb R$-spectral theorem, we obtain an $\mathbb R$-eigenvector $w$ of $\unpack((M,M^T) \delta)$. Unfortunately, $w$ is not an eigenvector of $(M,M^T) \delta$, but only of $\unpack((M,M^T) \delta)$. By the expression for $\unwind$ above, we have that there is a $\mathbb R$-vector $(u,v)$ such that $\unwind((u,v)) = w$. We see that $(u,v)$ is an eigenvector of $(M,M^T) \delta$.

    Since $(M,M^T) \delta$ is self-adjoint, it maps the orthogonal complement of $(u,v)$ to itself. We can define a matrix representation of $(M,M^T) \delta$ over this subspace. Since $(u,v)$ is a $\mathbb R \oplus \mathbb R$-vector, a $\svdalg$-orthonormal basis $B$ for $(u,v)^\perp$ is obtained simply as the $\mathbb R$-orthogonal complement of $v'$. The matrix representation of $(M,M^T) \delta$ over $(v')^\perp$ is obtained using the basis $B$. The result is a self-adjoint $\svdalg$-matrix $(M',(M')^T) \delta$ with one less dimension than $(M,M^T) \delta$. We repeat by finding another eigenvector, restricting to the orthogonal complement of \emph{that}, etc. Formally, this proof is by induction.
\end{proof}

\begin{proposition}[skew-Takagi decomposition] \label{skew-takagi}
    Given a complex skew-symmetric matrix $M$ (i.e. one which satisfies $M = -M^T$, and not to be confused with a skew-Hermitian $\mathbb C$-matrix) there exists a $\mathbb C$-unitary matrix $U$ and an $\mathbb R$-matrix $D = D_1 \oplus D_2 \oplus \dotsb \oplus D_k$ such that $D_i$ is either of the form $\begin{pmatrix} 0 & -\mu \\ \mu & 0 \end{pmatrix}$ or $(0)$, and $M = UDU^T$. (Note that $U^T \neq U^{-1}$).
\end{proposition}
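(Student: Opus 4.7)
The plan is to mirror the template of Propositions~\ref{takagi} and~\ref{svd}. First I would verify that $M\delta$ is self-adjoint over $\skewtakalg$: under $*_{-1}$ one has $\delta^* = -\delta$ and $\delta z = \bar z \delta$ for $z \in \mathbb C$, so $(M\delta)^* = \delta^* M^* = -\delta M^* = -M^T\delta$, which equals $M\delta$ precisely because $M = -M^T$. Thus $M\delta$ is an infinitesimal self-adjoint $\skewtakalg$-matrix, which is exactly the setting the preceding section predicts gives the skew-Takagi decomposition.

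Granting for the moment a $\skewtakalg$-unitary ``diagonalisation'' $M\delta = W D W^*$ (where $D$ is block-diagonal with blocks drawn from the atoms of the spectral monoid of $\skewtakalg$), I would write $W = W_0 + W'\delta$ with $W_0$ a $\mathbb C$-unitary matrix, and $D = D_0 + D'\delta$. Substituting and extracting the $\delta$-component using $\delta^2 = 0$ and $\delta z = \bar z\delta$ yields $M = W_0 D' W_0^T$. Choosing the blocks of $D$ to be of the form $(0)$ or $\begin{pmatrix} 0 & -\mu \\ \mu & 0\end{pmatrix}\delta$ then forces $D'$ to have precisely the shape stated in the proposition, and $W_0$ is the required $\mathbb C$-unitary factor.

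To establish existence of such a $W$, I would apply the unpack-and-unwind strategy, defining $\unpack$ and $\unwind$ as sign-flipped variants of those used in Proposition~\ref{takagi} so as to absorb $\delta^{*_{-1}} = -\delta$ in place of $\delta^{*_1} = \delta$. Writing $M = A + B i$ with $A, B$ real skew-symmetric, the $\varepsilon$-coefficient of $\unpack(M\delta)$ will be a real \emph{skew}-symmetric matrix, not the symmetric matrix that appeared in the Takagi case. This is where the main obstacle lies: a real skew-symmetric matrix has no real eigenvectors, since its nonzero eigenvalues are purely imaginary and come in conjugate pairs. The remedy is to invoke the real orthogonal canonical form for skew-symmetric matrices, which produces $2\times 2$ blocks $\begin{pmatrix}0 & -\mu \\ \mu & 0\end{pmatrix}$ together with kernel directions --- precisely the atoms required by the proposition.

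Unwinding such a $2$-dimensional $\mathbb R$-invariant subspace (or a $1$-dimensional kernel direction) back through $\unwind$ then yields a $2$- or $1$-dimensional invariant subspace of $M\delta$ on which $M\delta$ realises one block of $D$. The inductive step restricts $M\delta$ to the $\skewtakalg$-orthogonal complement of this subspace, stripping off either two or one dimension at a time; the remaining care is to check that the restricted matrix is again of the form $M'\delta$ with $M'$ complex skew-symmetric, so that the hypothesis propagates. Assembling the blocks produced at each stage yields the decomposition in the statement.
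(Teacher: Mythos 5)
Your proposal follows essentially the same route as the paper's proof: verify $(M\delta)^*=M\delta$ over $\skewtakalg$, apply a sign-adjusted unpack-and-unwind to reduce to the real \emph{skew}-symmetric spectral theorem (whose $2\times 2$ rotation blocks and kernel directions are exactly the atoms in the statement), unwind the resulting two- or one-dimensional invariant subspace, restrict to the orthogonal complement, and induct. The only substantive difference is presentational --- you separate ``extract the skew-Takagi factorisation from an assumed diagonalisation'' from ``construct the diagonalisation,'' while the paper interleaves them --- and you correctly flag the need to check that the restriction is again of the form $M'\delta$ with $M'$ skew-symmetric, a point the paper asserts without comment.
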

\begin{proof}
    Observe that while $M$ is not Hermitian, we may introduce a new scalar $\delta$ such that $\delta^2 = 0, \delta i = -i \delta$ and $\delta^* = -\delta$. We see that $M\delta$ is now a $\skewtakalg$-matrix, and is indeed self-adjoint (over $\skewtakalg$) because $(M\delta)^* = \delta^* M^* = -\delta \overline M^T = -M^T \delta = M \delta$. We can now hope to employ some analogue of the spectral theorem. It's easily seen that if for a matrix $K$ we have that $K \delta$ is unitarily similar to $M \delta$, then there is a $U$ such that $UMU^T = K$. We intend to find a canonical $K$.
    
    We define $\unpack(A + Bi + C\delta + D i \delta) := \begin{pmatrix}A + C \varepsilon & -B - D\varepsilon \\ B - D \varepsilon & A + C \varepsilon \end{pmatrix}$ (where $A, B, C, D$ are arbitrary $\mathbb R$-matrices of equal dimensions) and $\unwind(a + bi + c\delta + d i \delta) := \begin{pmatrix}a + c \epsilon \\ b + d \varepsilon \end{pmatrix}$ (where $a, b, c, d$ are $\mathbb R$-vectors of equal dimensions). We observe that $\unpack(MK) = \unpack(M) \unpack(K)$ and $\unwind(Mv) = \unpack(M) \unwind(v)$.
    
    We have that $\unpack(M \delta) = \begin{pmatrix} \Re(M) & -\Im(M) \\ -\Im(M) & \Re(M)\end{pmatrix} \varepsilon$. Ignoring the $\varepsilon$, we have a skew-symmetric $\mathbb R$-matrix. By the skew-symmetric $\mathbb R$-spectral theorem, we obtain a pair of $\mathbb R$-eigenvectors $u$ and $v$ such that there exists a $\mu \in \mathbb R$ for which $\unpack(M) u = -\mu v$ and $\unpack(M)v = \mu u$. Unfortunately, $u$ and $v$ are not vectors over the same algebra as $M \delta$. By the expression for $\unwind$ above, we have that there are $\mathbb C$-vectors $u'$ and $v'$ such that $\unwind(u') = u$ and $\unwind(v') = v$. We see that $M \delta v' = - \mu v'$ and $M \delta u' = \mu v'$.

    \newcommand{\spn}{\operatorname{span}}
    Since $M \delta$ is self-adjoint, it maps the orthogonal complement of $\spn \{u', v'\}$ to itself. We can define a matrix representation of $M \delta$ over this subspace. Since $u'$ and $v'$ are $\mathbb C$-vectors, a $\skewtakalg$-orthonormal basis $B$ for $\spn \{u', v'\}^\perp$ is obtained simply from the $\mathbb C$-orthogonal complement of $\spn \{u', v'\}$. The matrix representation of $M \delta$ over $\spn \{u', v'\}^\perp$ is obtained using the basis $B$. The result is a self-adjoint $\skewtakalg$-matrix $M' \delta$ with one less dimension than $M \delta$. We repeat the above with $M'\delta$. Formally, this proof is by induction.
\end{proof}

\begin{proposition}[quaternion skew-spectral theorem] \label{skew-quaternion-spectral}
    Given a skew-Hermitian quaternion matrix $M$ (i.e. one which satisfies $M = -M^*$, where we use the standard quaternion involution) there exists a $\mathbb H$-unitary matrix $U$ and an $\mathbb R$-matrix $D = D_1 \oplus D_2 \oplus \dotsb \oplus D_k$ such that $D_i$ is either of the form $\begin{pmatrix} 0 & -\mu \\ \mu & 0 \end{pmatrix}$ or $(0)$, and $M = UDU^*$.
\end{proposition}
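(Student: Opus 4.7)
The plan is to mimic the template of Proposition \ref{skew-takagi}, adapted to the quaternion setting. First, define $\unpack: \mathbb{H}^{n \times n} \to \mathbb{R}^{4n \times 4n}$ by applying the left regular representation of $\mathbb{H}$ on $\mathbb{R}^4$ entry-wise, and $\unwind: \mathbb{H}^n \to \mathbb{R}^{4n}$ by writing each quaternion coordinate as its four real components. The standard identities $\unpack(MK) = \unpack(M)\unpack(K)$, $\unwind(Mv) = \unpack(M)\unwind(v)$ and $\unpack(M^*) = \unpack(M)^T$ all hold, so $M$ skew-Hermitian over $\mathbb{H}$ forces $\unpack(M)$ to be real skew-symmetric. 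The $\mathbb{R}$-skew-symmetric spectral theorem then produces real orthonormal $u, v \in \mathbb{R}^{4n}$ and $\mu \ge 0$ with $\unpack(M) u = -\mu v$ and $\unpack(M) v = \mu u$; unwinding yields $u', v' \in \mathbb{H}^n$ satisfying $M u' = -\mu v'$ and $M v' = \mu u'$, which would furnish the required $2 \times 2$ block were they quaternion-orthonormal.

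The main obstacle is that $u'$ and $v'$ are in general only \emph{real}-orthonormal: we have $\Re(u'^* v') = 0$, but the full quaternion inner product $u'^* v'$ may carry a nonzero pure-imaginary part, so the pair cannot directly serve as two columns of a quaternion-unitary $U$. To upgrade it, observe that each $\pm i\mu$-invariant subspace of $\unpack(M)$ carries a commuting $\mathbb{H}$-action (right multiplication on $\mathbb{H}^n$), making it into a module over $\mathbb{H} \otimes_\mathbb{R} \mathbb{R}[\unpack(M)/\mu] \cong M_2(\mathbb{C})$. Such a module decomposes into quaternion-one-dimensional simple summands, on each of which $M/\mu$ acts as left-multiplication by some unit pure-imaginary quaternion. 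Any two such summands, with unit generators $e_0, e_0'$ and actions $q_1, q_2$, can be fused into a genuinely quaternion-orthonormal pair $\{e, f\}$ with $Me = \mu f$ and $Mf = -\mu e$ by taking $e = e_0 p_1 + e_0' p_2$ with $|p_1|^2 = |p_2|^2 = \tfrac{1}{2}$ and tuning $p_1, p_2$ so that $p_1^* q_1 p_1 = -p_2^* q_2 p_2$; this last equation is always solvable because unit quaternions act transitively by conjugation on unit pure-imaginary quaternions.

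With $\{e, f\}$ in hand, its $\mathbb{H}$-linear span is $M$-invariant and its quaternion-orthogonal complement is $M$-invariant by skew-Hermiticity, so the construction iterates and produces $M = UDU^*$ in the stated form, with $\ker M$ contributing the $(0)$ blocks. The pairing step is the essential subtlety: it requires the quaternion multiplicity of every positive $\mu$ to be even, since an unpaired quaternion-one-dimensional simple summand would force a $1 \times 1$ pure-imaginary quaternion block not among the admissible $D_i$. The $1 \times 1$ example $M = (i)$ shows that a literal reading of the proposition fails without this evenness assumption, so the statement is to be understood with this condition implicit.
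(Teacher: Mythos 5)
Your observation is correct, and your counterexample $M = (i)$ is valid: for a $1\times 1$ matrix the only admissible $D$ is $(0)$, while $u\,i\,u^*$ for a unit quaternion $u$ is always a unit pure-imaginary quaternion, never zero. A stronger counterexample is $M = \operatorname{diag}(i, 2i)$: unitary similarity over $\mathbb H$ preserves the multiset of nonnegative moduli of the (right-)eigenvalues, and an admissible $2\times 2$ real $D$ is either a single rotation block $\left(\begin{smallmatrix}0 & -\mu\\ \mu & 0\end{smallmatrix}\right)$ or $\operatorname{diag}(0,0)$, with moduli $\{\mu,\mu\}$ in both cases -- never $\{1,2\}$. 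So even parity of the total dimension does not rescue the claim; it is precisely the evenness of the $\mathbb H$-multiplicity of each positive modulus that does, as you say. Your fusion step is sound: with $|p_1|^2 = |p_2|^2 = 1/2$ and $p_1^* q_1 p_1 = -p_2^* q_2 p_2$ (always solvable because unit-quaternion conjugation acts transitively on the $2$-sphere of unit pure imaginaries), the pair $e = e_0 p_1 + e_0' p_2$, $f = e_0 q_1 p_1 + e_0' q_2 p_2$ is quaternion-orthonormal and satisfies $Me = \mu f$, $Mf = -\mu e$; and it correctly pairs only summands of the \emph{same} modulus. The unconditional normal form is $M = U\operatorname{diag}(i\mu_1,\dotsc,i\mu_n)U^*$ with real $\mu_k \ge 0$.

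The paper's own proof follows the same unpack-and-unwind route (via $\mathbb C^{2n\times 2n}$ rather than $\mathbb R^{4n\times 4n}$, a notational rather than substantive difference) and contains exactly the gap you isolate. After producing $u', v'$ with $Mu' = \mu v'$, $Mv' = -\mu u'$ (the $\delta$'s appearing at that step in the paper are copy-paste residue from the skew-Takagi proof), it restricts $M$ to the orthogonal complement of $\operatorname{span}\{u', v'\}$ and asserts that induction finishes, without verifying that this span has $\mathbb H$-dimension two or that the induced block on it takes the stated real form. Since the proposition is false as written, the gap is not repairable under its stated hypotheses; the statement, and its downstream use in lemma \ref{skew-quaternion-generalised-spectral-theorem}, needs to be restated either in terms of the diagonal $(i\mu_k)$ normal form or with your explicit evenness-of-multiplicity hypothesis.
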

\begin{proof}
    We are not able to employ the same trick as for a skew-Hermitian matrix over the complex numbers (with their usual involution). We employ unpack-and-unwind instead.
    
    We define $\unpack(A + Bi + Cj + D k) := \begin{pmatrix}A + B i & -C + D i \\ C + D i & A - B i \end{pmatrix}$ (where $A, B, C, D$ are arbitrary $\mathbb R$-matrices of equal dimensions) and $\unwind(a + b i + c j + d k) := \begin{pmatrix}a + bi \\ c + d i \end{pmatrix}$ (where $a, b, c, d$ are $\mathbb R$-vectors of equal dimensions). We observe that $\unpack(MK) = \unpack(M) \unpack(K)$ and $\unwind(Mv) = \unpack(M) \unwind(v)$.
    
    We have that $\unpack(M)$ is a skew-symmetric $\mathbb R$-matrix. By the skew-symmetric $\mathbb R$-spectral theorem, we obtain a pair of $\mathbb R$-eigenvectors $u$ and $v$ such that there exists a $\mu \in \mathbb R$ for which $\unpack(M) u = -\mu v$ and $\unpack(M) v = \mu u$. Unfortunately, $u$ and $v$ are not vectors over the same algebra as $M$. By the expression for $\unwind$ above, we have that there are $\mathbb C$-vectors $u'$ and $v'$ such that $\unwind(u') = u$ and $\unwind(v') = v$. We see that $M \delta v' = - \mu v'$ and $M \delta u' = \mu v'$.
    \newcommand{\spn}{\operatorname{span}}
    Since $M$ is a skew-Hermitian, it maps the orthogonal complement of $\spn \{u', v'\}$ to itself. We can define a matrix representation of $M$ over this subspace. We know that all modules over the quaternions admit a basis, and this basis can always be orthonormalised. We restrict $M$ to the orthogonal complement of $\spn \{u', v'\}$ by use of a basis, and obtain a matrix $M'$. Formally, this proof is by induction.
\end{proof}

There are many more examples of this trick, for instance involving some matrix decompositions over the dual numbers, reducing them to the dual-number spectral theorem \cite{gutin_generalizations_2021}.

\section{Proving the spectral theorem for the ``SVD $*$-algebra'' $\svdalg$}

In this section, we prove a spectral theorem for $\svdalg$.

First though, we must define the $\unpack$ and $\unwind$ operations:
$$\begin{aligned}
  \unpack\left((A,B) + (B',A')\delta\right) &= \begin{bmatrix}A & A'\varepsilon \\ B'\varepsilon & B\end{bmatrix}, \\
  \unwind\left((u,v) + (v',u')\delta\right) &= \begin{bmatrix}u + \varepsilon u' \\ v + v'\varepsilon\end{bmatrix}.
\end{aligned}$$
We see that
$$\begin{aligned}
  \unpack(X Y) &= \unpack(X) \unpack(Y),\\
  \unpack(X + Y) &= \unpack(X) + \unpack(Y),\\
  \unwind(X v) &= \unpack(X) \unwind(v),\\
  \unwind(u + v) &= \unwind(u) + \unwind(v),\\
  \unpack(X^*) &= \unpack(X)^*.
\end{aligned}$$
\begin{paragraph}{Some preliminaries on notation:} In the following arguments, we sometimes conflate a dual number $a + b \varepsilon$ with a member of $\svdalg$ of the form $a + b \delta$. We define $\operatorname{st}(a + b\varepsilon) = a$ (the ``standard part'') and $\operatorname{nst}(a + b \varepsilon) = b$ (the ``non-standard part''). Be aware that when we write $(a,b)$, we mean a member of $\svdalg$, and not a row vector -- we don't presently expect this to cause confusion. When we write $(M,K)$ (or some other capital letters), we mean a matrix over $\svdalg$ of the form $M(1,0) + K(0,1)$.
\end{paragraph}

\begin{paragraph}{Some facts we're assuming:} We make extensive use of the fact that every dual number matrix satisfying $S = S^T$ also satisfies a form of the spectral theorem: We have that $S = U D U^T$ is true for a \emph{unique} diagonal matrix $D$ over the dual numbers, and some orthogonal matrix $U$ over the dual numbers (i.e. satisfying $U^T = U^{-1}$).
\end{paragraph}

\begin{lemma} \label{semi-eigenvalue-means-eigenvalue}
  If a self-adjoint $\svdalg$-matrix $H$ admits a pair of vectors $w_L$ and $w_R$ such that:
  \begin{enumerate}
    \item $w_L = w_L (1,0)$ and $w_R = w_R (0,1)$,
    \item $w_L^* w_L = (1,0)$ and $w_R^* w_R = (0,1)$,
    \item There exist real numbers $\lambda_L$ and $\lambda_R$ for which $H w_L = w_L \lambda_L$ and $H w_R = w_R \lambda_R$,
  \end{enumerate}
  then $w = w_L + w_R$ is a unit eigenvector of $H$ with eigenvalue $(\lambda_L, \lambda_R)$.
\end{lemma}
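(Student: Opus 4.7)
I would split the proof into two parts: (a) verifying that $w := w_L + w_R$ satisfies the eigenvalue equation $Hw = w(\lambda_L, \lambda_R)$, and (b) verifying that $w^* w = 1$. Part (a) is a direct manipulation using the idempotents $(1,0), (0,1)$; part (b) reduces to a cross-term vanishing that is the genuine content of the lemma.

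\textbf{Eigenvalue equation.} Write $\mu = (\lambda_L, \lambda_R) \in \svdalg$. The point is that the real scalar $\lambda_L$, which embeds as $(\lambda_L, \lambda_L)$, interacts with $w_L$ in the same way as $\mu$: using hypothesis (1),
\[
w_L \mu = w_L(1,0)(\lambda_L, \lambda_R) = w_L(\lambda_L, 0) = w_L(1,0)(\lambda_L, \lambda_L) = w_L \lambda_L,
\]
and symmetrically $w_R \mu = w_R \lambda_R$. Summing and applying hypothesis (3), $Hw = Hw_L + Hw_R = w_L \lambda_L + w_R \lambda_R = w_L \mu + w_R \mu = w \mu$.

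\textbf{Unit norm.} Expanding and using hypothesis (2),
\[
w^* w = w_L^* w_L + w_R^* w_R + w_L^* w_R + w_R^* w_L = 1 + \bigl(w_L^* w_R + (w_L^* w_R)^*\bigr),
\]
so it remains to show that $w_L^* w_R$ is skew, i.e., $w_L^* w_R + (w_L^* w_R)^* = 0$. I would derive $(\lambda_L - \lambda_R)(w_L^* w_R) = 0$ from self-adjointness of $H$ via
\[
\lambda_L(w_L^* w_R) = (Hw_L)^* w_R = w_L^*(Hw_R) = (w_L^* w_R)\lambda_R,
\]
using centrality of the real scalars $\lambda_L, \lambda_R$ in $\svdalg$. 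When $\lambda_L \neq \lambda_R$, this immediately forces $w_L^* w_R = 0$ and we are done.

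\textbf{The main obstacle.} The coincident case $\lambda_L = \lambda_R$ is the subtle one: the hypotheses as stated do not directly kill the cross term. A direct computation from the normal forms of $(w_L)_i = (a_i, 0) + (0, b_i')\delta$ and $(w_R)_i = (0, c_i) + (d_i', 0)\delta$ forced by hypothesis (1) shows that $w_L^* w_R$ always lies in the one-dimensional subspace $\{(e, 0)\delta : e \in \mathbb R\}$. I would then observe that replacing $w_L$ by $w_L - e w_R \delta$ preserves (1), (2), and (3) --- the last precisely because $\lambda_L = \lambda_R$, so right-multiplication by $\delta$ commutes with the action of $H$ up to $\delta^2 = 0$ --- while driving the cross term to zero. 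Modulo this orthogonalization, naturally arranged in the inductive application of this lemma in the larger spectral-theorem proof, the conclusion $w^* w = 1$ follows, and $w$ is the desired unit eigenvector with eigenvalue $(\lambda_L, \lambda_R)$.
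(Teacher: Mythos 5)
Your core argument in the case $\lambda_L \neq \lambda_R$ is essentially the paper's: expand $w^* w$, note that the cross term is a self-adjoint infinitesimal, and obtain $(\lambda_L - \lambda_R)\cdot(\text{cross term}) = 0$ from self-adjointness of $H$ together with the eigenvalue relations. The paper phrases this by writing $w^* w = 1 + B\delta$ with $B$ real and deriving $B(\lambda_L - \lambda_R) = 0$ from $(1+B\delta)\mu = \mu(1+B\delta)$; you derive the equivalent identity $\lambda_L(w_L^* w_R) = (w_L^* w_R)\lambda_R$ directly. Same mechanism, slightly different bookkeeping.

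Where you genuinely diverge from (and improve on) the paper is in flagging the coincident case $\lambda_L = \lambda_R$. You are right to be suspicious: the lemma as literally stated is \emph{false} when $\lambda_L = \lambda_R$ (a $1\times 1$ example with $w_L = (1,0) + (0,b')\delta$, $w_R = (0,1) + (a',0)\delta$, $a' + b' \neq 0$, satisfies all three hypotheses but has $w^* w = 1 + (a'+b')\delta \neq 1$). The paper's proof silently invokes ``Since furthermore $\lambda_L \neq \lambda_R$'' in its last line, even though this is not among the stated hypotheses; it gets away with this because the only place the lemma is used (the last bullet of the proof of Lemma \ref{when-eigenvalues-all-real}) is explicitly restricted to the subcase $\lambda_i \neq \lambda_j$. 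Your orthogonalization $w_L \mapsto w_L - e\, w_R \delta$ does preserve (1), (2), (3) when $\lambda_L = \lambda_R$ and kills the cross term, so it repairs the degenerate case, but note it proves a slightly weaker statement than the lemma as written (the modified $\tilde w_L$, not the original $w_L$, yields the unit eigenvector). The cleaner fix, matching both the paper's proof and its single application, is to add $\lambda_L \neq \lambda_R$ as an explicit hypothesis to the lemma.
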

\begin{proof}
  From 1, we may expand $w_L$ to $w_L = (u, 0) + \delta(u', 0)$ and $w_R$ to $w_R = (0, v) + \delta(0, v')$. Then $w_L + w_R = (u,v) + \delta(u',v')$. Let $w = w_L + w_R$. It's easy to see that $w$ is an eigenvector of $H$ of eigenvalue $(\lambda_L, \lambda_R)$. It remains to consider $w^* w$.
  
  We have that $w^* w = A + B\delta$ for some $A$ and $B$. We know that $A = 1$ because $u^* u = v^* v = 1$ (from item 2). We also get that $B$ is real because $B\delta = w^* w - 1 = (w^*w - 1)^* = (B\delta)^*$.

  In fact, $B = 0$, as we will now show. Let $\mu$ be the eigenvalue of $H$ corresponding to $w$. We have $(1+B\delta)\mu = w^* w \mu = w^* H w = (H w)^* w = (w \mu)^* w = \mu^* w^* w = \mu (1 + B\delta)$. Since $\mu = (\lambda_L, \lambda_R)$ and $(1+B\delta)\mu = \mu (1 + B\delta)$, we get $B(\lambda_L - \lambda_R) = 0$. Since furthermore $\lambda_L \neq \lambda_R$, we have that $B = 0$.
\end{proof}

\begin{lemma} \label{dual-eigenvalue-means-eigenvalue}
  Let $\unwind(v)$ be an eigenvector of $\unpack(H)$ (for a self-adjoint $\svdalg$ matrix $H$) with eigenvalue $\lambda$ that is not real. We have that the normalised vector $w = v (v^* v)^{-1/2}$ satisfies $w^* w = 1$ and $H w = w \lambda$.
\end{lemma}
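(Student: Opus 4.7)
The plan is to transfer the eigenvalue equation from the dual-number picture back to $\svdalg$, then exploit non-reality of $\lambda$ together with self-adjointness of $H$ to normalise $v$ inside a commutative subalgebra where square roots make sense.

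Writing $v = (p,q) + (p',q')\delta$ for real vectors $p,q,p',q'$, I would first translate the hypothesis $\unpack(H)\unwind(v) = \lambda\unwind(v)$ into an equation over $\svdalg$. Identifying the dual number $\lambda = \alpha + \beta\varepsilon$ with the $\svdalg$-element $\alpha + \beta\delta$, one uses the intertwining identities together with bijectivity of $\unwind$ to obtain the corresponding $\svdalg$-eigenvalue relation between $v$ and $\lambda$.

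The heart of the argument is the identity $\|p\| = \|q\|$, which forces $v^*v$ to lie in the commutative dual-number image $\{(a,a) + (b,b)\delta \mid a,b \in \mathbb R\}$ of $\svdalg$. To prove the equality, I would split $\unpack(H)\unwind(v) = \lambda\unwind(v)$ into its standard and $\varepsilon$-parts. The standard parts give $Ap = \alpha p$ and $Bq = \alpha q$, where $A,B$ are the diagonal blocks of $\unpack(H)$. Dotting the $\varepsilon$-part equations with $p$ and $q$ respectively, using $A=A^T$, $B=B^T$ together with the fact that self-adjointness of $H$ makes the off-diagonal blocks of $\unpack(H)$ transposes of one another, I would derive $\beta\|p\|^2 = p^T A' q = q^T(A')^T p = \beta\|q\|^2$; the non-reality $\beta\neq 0$ then yields $\|p\|=\|q\|$. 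The standard part of $\unwind(v)$ is moreover nonzero (a genuinely non-real eigenvalue cannot act nontrivially on a purely infinitesimal vector, since the $\beta\varepsilon$ contribution would be killed by $\varepsilon^2=0$), so $\|p\|=\|q\|>0$.

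A routine calculation now gives $v^*v = \|p\|^2 + \gamma\delta$ (inside the dual-number image) for some $\gamma \in \mathbb R$. Positive elements of that commutative $*$-subalgebra admit unique positive self-adjoint square roots, so $w = v(v^*v)^{-1/2}$ is well-defined. The identity $w^*w = (v^*v)^{-1/2}(v^*v)(v^*v)^{-1/2} = 1$ is then immediate. For the eigenvalue relation, $(v^*v)^{-1/2}$ and $\lambda$ both lie inside the same commutative subalgebra and therefore commute, which allows the eigenvalue relation obtained in the first step to be transported through the normalisation to give $Hw = w\lambda$. The hard part is the norm equality $\|p\| = \|q\|$: without it, $v^*v$ would not lie in a commutative subalgebra of $\svdalg$, the normalisation step as stated would not even be well-defined, and the lemma would fail.
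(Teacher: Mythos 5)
Your argument is essentially correct, but the route to the key identity $a^*a = b^*b$ (which you write as $\|p\|=\|q\|$) is genuinely different from the paper's. The paper works entirely inside $\svdalg$: since $H$ is self-adjoint and $Hv = v\lambda$, one also has $H(v(1,-1)) = (v(1,-1))\overline\lambda$, and then the chain
$$v^* v (1,-1)\,\overline\lambda \;=\; v^* H v(1,-1) \;=\; (Hv)^* v(1,-1) \;=\; \lambda\, v^* v(1,-1)$$
is expanded and compared term by term; the $\delta$-coefficient gives $a^*a = b^*b$. Your argument instead works in the unpacked picture: you split $\unpack(H)$ into its symmetric diagonal blocks $M,K$ and off-diagonal blocks $A'\varepsilon, (A')^T\varepsilon$, split the eigenvalue equation into standard and $\varepsilon$-parts, and dot the $\varepsilon$-part equations with $p$ and $q$ to get $\beta\|p\|^2 = p^T A' q = q^T (A')^T p = \beta\|q\|^2$. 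This is a perfectly valid and somewhat more concrete derivation; the paper's version buys coordinate-independence and is shorter once the identity $v^*v(1,-1)\overline\lambda = \lambda v^*v(1,-1)$ is noticed, while yours makes it clearer \emph{where} the self-adjointness of the blocks of $\unpack(H)$ is used.

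One spot to tighten: your justification that $\|p\| > 0$ (``a genuinely non-real eigenvalue cannot act nontrivially on a purely infinitesimal vector'') is hand-wavy as written, since the eigenvalue equation $\unpack(H)\unwind(v) = \lambda\unwind(v)$ would still formally hold for a purely infinitesimal $\unwind(v)$ — the eigenvalue simply wouldn't be uniquely determined. The paper short-circuits this by invoking that $\unwind(v)$ is a \emph{unit} vector (coming from the orthonormal eigenbasis supplied by the $(\mathbb D,\operatorname{id})$ spectral theorem), from which $a^*a + b^*b = 1$ and hence $a^*a = b^*b = 1/2 > 0$ follows at once. You should make the unit-norm hypothesis explicit, which both cleans up the positivity and pins down $v^*v = 1/2 + X\delta$ with $X$ real (via $(v^*v)^* = v^*v$), exactly as the paper does. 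The remainder of your sketch — $v^*v$ lies in a commutative $*$-subalgebra, take the positive square root, commutativity lets $\lambda$ slide past $(v^*v)^{-1/2}$ — coincides with the paper's closing step.
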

\begin{proof}
  Observe that $H v = v \lambda$. From this, we get that $H (v(1,-1)) = (v(1,-1)) \overline\lambda$. Expand $v$ to $v = (a,b) + \delta(c,d)$. Expand $\lambda$ to $\lambda_0 + \lambda'\varepsilon$.
  
  \begin{paragraph}{We seek to show that $a^* a = b^* b$:} We have that $v^* v(1,-1) \overline\lambda = v^* H v(1,-1) = (H v)^* v(1,-1) = (v \lambda)^* v(1,-1) = \lambda v^* v(1,-1)$. We therefore have that $v^* v(1,-1) \overline\lambda = \lambda v^* v(1,-1)$. Substituting our expansion of $\lambda$ into this gives $v^* v(1,-1) (\lambda_0 - \lambda'\delta) = (\lambda_0 + \lambda'\delta) v^* v(1,-1)$. After some cancellation, this simplifies to $-v^* v (1,-1) \delta = \delta v^* v (1,-1)$. Further substituting our expansion of $v$ and rearranging $\delta$ gives $\delta(b^* b, -a^* a) = \delta(a^* a,-b^* b)$. We thus get that $a^* a = b^* b$. 
  \end{paragraph}
  
  \begin{paragraph}{We seek to show that $a^*a = b^*b = 1/2$:} We know from the previous paragraph that $a^* a = b^* b$. We also know that $\unwind(v) = \begin{bmatrix}a + c\varepsilon \\ b + d\varepsilon \end{bmatrix}$ is a unit vector by the $(\mathbb D, \operatorname{id})$ spectral theorem. So we conclude that $a^* a = b^* b = 1/2$.
  \end{paragraph}

  \begin{paragraph}{We seek to show that $v^* v$ is in $\mathbb D$:} From the previous paragraph, we get that $v^* v = 1/2 + X\delta$ for some $X$. Clearly, $(v^* v)^* = v^* v$, so we conclude $X$ is real.
  \end{paragraph}

  \begin{paragraph}{We prove the main claim:} Now let $w = v (v^* v)^{-1/2}$. Clearly, $w^* w = 1$, as we sought. We also observe that $H w = H v (v^* v)^{-1/2} = v \lambda (v^* v)^{-1/2} = v (v^* v)^{-1/2} \lambda$, with the last equality holding because dual numbers commute. So $Hw = w \lambda$ as desired.
  \end{paragraph}
\end{proof}

\begin{lemma} \label{when-eigenvalues-all-real}
  If all eigenvalues of $\unpack(H)$ (for self-adjoint $\svdalg$ matrix $H$) are real, then there exists an eigenvector $v$ of $H$ for which $v^* v =1$, with eigenvalue of the form $(x,y) \in \mathbb R^2$.
\end{lemma}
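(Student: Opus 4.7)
The plan is to build the desired eigenvector as the sum of a ``left-only'' piece $w_L$ and a ``right-only'' piece $w_R$ and invoke Lemma~\ref{semi-eigenvalue-means-eigenvalue}; the hypothesis that all dual eigenvalues of $\unpack(H)$ are real will be used to guarantee that the linear systems defining these two pieces are solvable. Writing $H = (A, B) + (M, M^T)\delta$ with $A = A^T$ and $B = B^T$, I would first reduce by $\svdalg$-unitary similarity by $(U, V)$ with $U, V$ real-orthogonal to the case where $A = \operatorname{diag}(\alpha_1, \dotsc, \alpha_n)$ and $B = \operatorname{diag}(\beta_1, \dotsc, \beta_n)$ are diagonal. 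Such a similarity sends $H$ to $(U^T A U, V^T B V) + (U^T M V, (U^T M V)^T)\delta$ and conjugates $\unpack(H)$ by a dual-orthogonal matrix, so the real-eigenvalue hypothesis is preserved.

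Next comes the key step: a first-order perturbation analysis of $\unpack(H) = \operatorname{st}(\unpack(H)) + \varepsilon \operatorname{nst}(\unpack(H))$. The standard part is $\operatorname{diag}(A, B)$; at each shared eigenvalue $\lambda \in \sigma(A) \cap \sigma(B)$ it has a degenerate eigenspace $V_A(\lambda) \oplus V_B(\lambda)$, and for all dual eigenvalues to remain real (no $\varepsilon$-correction) the degenerate perturbation operator on this subspace must vanish. In the diagonal basis this forces the entries of $M$ coupling $V_A(\lambda)$ with $V_B(\lambda)$ to be zero; by a direct $\svdalg$ calculation, the very same condition is precisely the simultaneous solvability of $(B - \lambda I) u' = -M^T u$ for all $u \in V_A(\lambda)$ and $(A - \lambda I) v' = -M v$ for all $v \in V_B(\lambda)$.

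Now pick, whenever possible, $\lambda_L \in \sigma(A)$ and $\lambda_R \in \sigma(B)$ with $\lambda_L \neq \lambda_R$. Take a unit eigenvector $u = e_k$ of $A$ at $\lambda_L$, solve $(B - \lambda_L I) u' = -M^T u$ for some $u'$ (trivially when $\lambda_L \notin \sigma(B)$, and by the preceding paragraph otherwise), and set $w_L := (u, 0) + (0, u')\delta$. A direct calculation gives $H w_L = \lambda_L w_L$, $w_L = w_L(1, 0)$, and $w_L^* w_L = (u^T u, 0) = (1, 0)$; symmetrically one builds $w_R := (0, v) + (v', 0)\delta$ with $H w_R = \lambda_R w_R$, $w_R = w_R(0, 1)$, and $w_R^* w_R = (0, 1)$. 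Since $\lambda_L \neq \lambda_R$, Lemma~\ref{semi-eigenvalue-means-eigenvalue} (whose proof uses this inequality to conclude the $\delta$-part of $(w_L + w_R)^*(w_L + w_R)$ vanishes) then delivers the unit eigenvector $w := w_L + w_R$ of $H$ with eigenvalue $(\lambda_L, \lambda_R) \in \mathbb R^2$.

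The only case in which no such pair $(\lambda_L, \lambda_R)$ exists is $\sigma(A) = \sigma(B) = \{\lambda\}$, i.e.\ $A = \lambda I = B$; the perturbation condition then collapses to $M = 0$, so $H$ equals the real-scalar matrix $\lambda I$ and any unit vector such as $(e_1, e_1)$ works. The main obstacle in the plan is the perturbation step: converting the qualitative ``all eigenvalues of $\unpack(H)$ are real'' hypothesis into the precise index-level solvability conditions that make the rest of the construction go through at shared eigenvalues of $A$ and $B$.
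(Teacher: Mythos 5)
Your proof is correct, and it takes a genuinely different route from the paper's. The paper's argument keeps the full dual-number spectral decomposition of $\unpack(H)$ as a black box: it obtains an orthonormal eigenbasis $\{\unwind(u_i)\}$, argues by exhaustion that not every $u_i(1,0)$ (resp.\ $u_i(0,1)$) can be a $\delta$-multiple, picks indices $i,j$ with $u_i(1,0)$, $u_j(0,1)$ not $\delta$-multiples and $\lambda_i\neq\lambda_j$, and sets $w_L = u_i(1,0)$, $w_R = u_j(0,1)$ before invoking Lemma~\ref{semi-eigenvalue-means-eigenvalue}. You instead diagonalize $\operatorname{st}(H)$ by a real-orthogonal $\svdalg$-unitary $(U,V)$, run the (exact, because $\varepsilon^2=0$) degenerate first-order perturbation analysis on $\unpack(H)$, and translate the hypothesis ``all dual eigenvalues of $\unpack(H)$ real'' into the concrete vanishing of the $V_A(\lambda)$--$V_B(\lambda)$ block of $M$ at each shared eigenvalue $\lambda$. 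That vanishing is then read off as the Fredholm-type solvability condition for the linear systems $(B-\lambda_L I)u'=-M^Tu$ and $(A-\lambda_R I)v'=-Mv$, and you assemble $w_L=(u,0)+(0,u')\delta$, $w_R=(0,v)+(v',0)\delta$ by hand. Both arguments then end at Lemma~\ref{semi-eigenvalue-means-eigenvalue} and treat the collapse case $\sigma(A)=\sigma(B)=\{\lambda\}$ (forcing $M=0$, $H=\lambda I$) the same way.

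What each approach buys: yours is constructive and self-contained at the index level --- it tells you exactly which entries of $M$ the hypothesis kills and exactly how to produce $u'$ and $v'$, and as a by-product the normalizations $w_L^*w_L=(1,0)$, $w_R^*w_R=(0,1)$ come for free since you start from unit coordinate vectors $e_k$ after diagonalization. The paper's version is shorter and stays at the level of the $\unwind/\unpack$ machinery that organizes the whole section, but it has to argue indirectly through the ``not a multiple of $\delta$'' cases and leaves the rescaling of $u_i(1,0)$, $u_j(0,1)$ to unit norm implicit (Lemma~\ref{semi-eigenvalue-means-eigenvalue} requires $w_L^*w_L=(1,0)$ and $w_R^*w_R=(0,1)$, which do not hold automatically for $u_i(1,0)$, $u_j(0,1)$; your construction sidesteps this). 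One small caution: when you check that the perturbation condition coincides with your solvability condition, make sure to use the $\unpack$ convention from Proposition~\ref{svd} ($\unpack((A,B)+(C,D)\delta)=\begin{pmatrix}A&C\varepsilon\\ D\varepsilon&B\end{pmatrix}$), which gives $\operatorname{nst}(\unpack(H))=\begin{pmatrix}0&M\\ M^T&0\end{pmatrix}$; with this normalization the vanishing block of $M$ is exactly $\{M_{ab}: \alpha_a=\beta_b=\lambda\}$, matching your two linear-system conditions.
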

\begin{proof}
  Let $\{\unwind(u_1), \unwind(u_2), \dotsc, \unwind(u_{2n})\}$ be an orthonormal eigenbasis of $\unpack(H)$ with corresponding real eigenvalues $\{\lambda_1, \lambda_2, \dotsc, \lambda_{2n}\}$.

  Observe that from $\unpack(H)\unwind(u_i) = \unwind(u_i)\lambda_i$, we get $H u_i = u_i \lambda_i$. Furthermore, we may scale $u_i$ by the scalars $(1,0)$ and $(0,1)$ and the analogous identity remains true: More explicitly, we have that $H u_i(1,0) = u_i (1,0) \lambda_i$ and $H u_i (0,1) = u_i (0,1) \lambda_i$.

  \begin{paragraph}{We seek to show that for each $i$, at least one of $u_i(1,0)$ or $u_i(0,1)$ is not a multiple of $\delta$:} If they are both multiples of $\delta$, then $u_i$ is also a multiple of $\delta$ because $u_i = u_i(1,0) + u_i(0,1)$. But then $\unwind(u_i)$ is a multiple of $\varepsilon$. This is clearly impossible because $\unwind(u_i)$ has unit length.
  \end{paragraph}

  \begin{paragraph}{We seek to show that it is \emph{not} the case that $u_i(1,0)$ is a multiple of $\delta$ for every $i$:} Assume otherwise. We have that the $\unwind$ of a multiple of $\delta$ is a multiple of $\varepsilon$. We thus get that the following is a multiple of $\varepsilon$: $\unwind(u_i(1,0)) = \begin{pmatrix}I_n & 0_n \\ 0_n & 0_n\end{pmatrix} \unwind(u_i)$. This means that the first half of the components of each vector $\unwind(u_i)$ is infinitesimal. But then $\{\unwind(u_i) : i \in \{1,\dotsc,2n\}\}$ cannot be linearly independent.
  \end{paragraph}

  \begin{paragraph}{We seek to show that it is \emph{not} the case that $u_i(0,1)$ is a multiple of $\delta$ for every $i$:} Same argument as above.
  \end{paragraph}

  \begin{paragraph}{We seek to show that there exists a unit eigenvector of $H$:} Pick a $u_i$ such that $u_i(1,0)$ is not a multiple of $\delta$. Either there exists a $u_j$ such that $u_j(0,1)$ is not a multiple of $\delta$ \emph{and} $\lambda_i \neq \lambda_j$, or there doesn't:
    \begin{itemize}
      \item If there doesn't, then we conclude that all eigenvalues of $\unpack(H)$ are the same real number $\lambda$. But then $\unpack(H)$ is a real multiple of the identity matrix. Therefore so is $H$. Any vector is now an eigenvector of $H$, and so we are done.
      \item If there does, then pick this $i$ and $j$. Let $w = u_i(1,0) + u_j(0,1)$. We see that $H w = w(\lambda_i, \lambda_j)$. It remains to show that $w^* w = 1$. But this follows from lemma \ref{semi-eigenvalue-means-eigenvalue}, so we are done. \qedhere
    \end{itemize}
  \end{paragraph}
\end{proof}

The following lemma is necessary to be able to take orthogonal complements. In general, this can be quite complicated if we \emph{only} assume the conditions $P^2 = P$ and $P^* = P$. We therefore need to add the condition that $P$ should be displaced from a real orthogonal projection $Q$ by only a multiple of $\delta$.

\begin{lemma} \label{can-take-orthogonal-complements}
  If $P^2 = P$, $P^* = P$ and $P = Q + \delta (K,K^T)$ for some real projection matrix $Q$ and some real matrix $K$, then $P$ is unitarily diagonalisable with eigenvalues either $0$ or $1$.
\end{lemma}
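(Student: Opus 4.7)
The plan is to build a unitary diagonalisation of $P$ directly, in two stages: first diagonalise the real projection $Q$ by a real orthogonal change of basis embedded diagonally into $\svdalg$, then cancel the remaining $\delta$-term with an \emph{infinitesimal} $\svdalg$-unitary. No induction or appeal to the preceding lemmas is needed.

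For the first stage, I would rewrite $P$ as $(Q, Q) + (K^T, K)\delta$ by moving $\delta$ to the right via $\delta(K, K^T) = (K^T, K)\delta$. Choose a real orthogonal $O$ with $O^T Q O = D$ where $D = \operatorname{diag}(0, I)$ in block form. Conjugation by the $\svdalg$-unitary $(O, O)$ sends $P$ to $(D, D) + ({K'}^T, K')\delta$ with $K' = O^T K O$. Expanding $P^2 = P$ separately yields the identity $QK + KQ = K$, which under the substitution becomes $DK' + K'D = K'$; in block form this forces both diagonal blocks of $K'$ to vanish, leaving $K'$ purely off-diagonal.

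For the second stage, I would conjugate by $U = I + (L, -L^T)\delta$ for a real matrix $L$ yet to be chosen; this is $\svdalg$-unitary because $\delta^2 = 0$, with $U^* = I + (-L, L^T)\delta$. A careful expansion (tracking the commutation $\delta(X, Y) = (Y, X)\delta$) gives $U P U^* = (D, D) + \bigl({K'}^T + [L, D],\, K' + [D, L^T]\bigr)\delta$. The two entries of the $\delta$-term are transposes of each other, so it suffices to solve $[L, D] = -{K'}^T$; this equation is solvable precisely because ${K'}^T$ has only off-diagonal blocks, which is exactly the image of the map $L \mapsto [L, D]$. With the resulting $L$, the $\delta$-term vanishes and $U P U^* = (D, D)$, an $\svdalg$-diagonal matrix with every diagonal entry equal to $(0, 0) = 0$ or $(1, 1) = 1$.

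The main obstacle is purely bookkeeping: the non-commutative relation $\delta(X, Y) = (Y, X)\delta$ must be applied consistently through both conjugations, and the adjoint formula must be used carefully when computing $U^*$. Once this is handled, the existence of $L$ and the restriction of eigenvalues to $\{0, 1\} \subset \svdalg$ are both immediate.
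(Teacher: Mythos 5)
Your proof is correct and essentially matches the paper's: both first diagonalise $Q$ by a real orthogonal change of basis, then use $P^2 = P$ to force the diagonal blocks of the $\delta$-part to vanish, and finally cancel the remaining off-diagonal $\delta$-part by conjugating with an infinitesimal unitary of the form $I + (\text{off-diagonal})\delta$. The paper writes this correcting unitary explicitly as a $2\times 2$ block matrix $\begin{pmatrix}I & \delta(L_{12}, L_{12}^T) \\ -\delta(L_{12}, L_{12}^T) & I\end{pmatrix}$ rather than phrasing the step as solving the commutator equation $[L,D]=-{K'}^T$, but the two are the same move.
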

\begin{proof}
  The real matrix $Q$ can be unitarily diagonalised using a real matrix $U$. We therefore have that $U P U^* = \operatorname{diag}(1,\dotsc,1,0,\dotsc,0)+ \delta (L,L^T)$ for some real matrix $L$. We write $U P U^*$ as a block matrix for clarity: $U P U^* = \begin{pmatrix}I + \delta (L_{11},L_{11}^T) & \delta(L_{12},L_{12}^T)\\ \delta(L_{12},L_{12}^T) & \delta(L_{22}, L_{22}^T)\end{pmatrix}$. If we square this block matrix and recall that $(U P U^*)^2 = U P U^*$, we get that $L_{11}=L_{22}=0$. We can finally diagonalise $U P U^*$ using the block matrix $V = \begin{pmatrix}I & \delta(L_{12}, L_{12}^T) \\ -\delta(L_{12}, L_{12}^T) & I \end{pmatrix}$.
\end{proof}

\begin{theorem} \label{svd-spectral-theorem}
  Every self-adjoint \(\svdalg\)-matrix \(H\) is unitarily diagonalisable with each eigenvalue either a dual number or of the form $(x,y) \in \mathbb R^2$.
\end{theorem}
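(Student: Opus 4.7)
The plan is to prove the theorem by induction on the matrix dimension $n$, extracting one unit eigenvector at a time and restricting $H$ to its orthogonal complement. The base case $n = 0$ is vacuous, so I focus on the inductive step: given a self-adjoint $\svdalg$-matrix $H$ of dimension $n \geq 1$, produce a single unit eigenvector whose eigenvalue is either a dual number or a pair $(x,y) \in \mathbb R^2$, split off its span, and invoke the inductive hypothesis on the resulting $(n-1)$-dimensional restriction.

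First I would pass to $\unpack(H)$. Because $\unpack$ commutes with $*$, $\unpack(H)$ is self-adjoint over $(\mathbb D, \operatorname{id})$ and so, by the assumed dual-number spectral theorem, admits an orthonormal eigenbasis with dual-number eigenvalues. Case-split on whether any of these eigenvalues has a nonzero $\varepsilon$-part. In the non-real case, Lemma \ref{dual-eigenvalue-means-eigenvalue} hands back a unit $\svdalg$-eigenvector $w$ of $H$ whose eigenvalue is the original (genuinely) dual number $\lambda$. Otherwise every eigenvalue of $\unpack(H)$ is real, and Lemma \ref{when-eigenvalues-all-real} produces a unit eigenvector $w$ of $H$ with eigenvalue of the form $(x,y) \in \mathbb R^2$. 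Either way, an eigenvector of the type required by the theorem statement is obtained.

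Next I would descend to $w^\perp$. Form the projection $P = I - w w^*$; since $w^* w = 1$, this satisfies $P^2 = P$ and $P^* = P$. Expanding $w$ as $(u,v) + \delta(u',v')$, a short direct calculation shows that $w w^*$ decomposes as a real symmetric projection plus a $\delta$-component of the cross-symmetric shape $\delta(K, K^T)$. This is exactly the shape hypothesised in Lemma \ref{can-take-orthogonal-complements}, which therefore unitarily diagonalises $P$ with eigenvalues in $\{0,1\}$. The $0$-eigenspace furnishes an $\svdalg$-orthonormal basis for $w^\perp$, and restricting $H$ to $w^\perp$ in that basis yields a self-adjoint $\svdalg$-matrix of dimension $n-1$, to which the inductive hypothesis applies. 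Assembling the diagonalisation of the restriction with the eigenpair $(w, \lambda)$ gives the desired unitary diagonalisation of $H$.

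The main obstacle will be verifying the cross-symmetric $\delta(K, K^T)$ shape of $w w^*$, particularly in the dual-number eigenvalue branch where $w$ arose by normalising $v$ through the dual-number scalar $(v^* v)^{-1/2}$. One has to track how this scalar interacts with the expansion $v = (a,b) + \delta(c,d)$, using the relations $a^* a = b^* b = 1/2$ and $v^* v \in \mathbb D$ extracted during the proof of Lemma \ref{dual-eigenvalue-means-eigenvalue}. In the real-eigenvalue branch the shape is immediate because $w = u_i(1,0) + u_j(0,1)$ already has a very restricted form. Once this bookkeeping is in place, the induction closes and the theorem follows.
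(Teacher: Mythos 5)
Your proposal follows the paper's proof essentially verbatim: induction on dimension, passing to $\unpack(H)$ over $(\mathbb D, \operatorname{id})$, the same case split via Lemmas \ref{dual-eigenvalue-means-eigenvalue} and \ref{when-eigenvalues-all-real}, and descent to $w^\perp$ via Lemma \ref{can-take-orthogonal-complements}. You additionally flag and sketch the verification that $ww^*$ has the cross-symmetric $Q + \delta(K,K^T)$ form required by Lemma \ref{can-take-orthogonal-complements} (which the paper leaves implicit); that check does go through, using $a^*a = b^*b = 1/2$ and $v^*v \in \mathbb D$ in the dual branch and the $w = u_i(1,0) + u_j(0,1)$ structure in the real branch.
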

\begin{proof}
  We prove this by induction on $k$ where $H$ is $k \times k$.

  The claim is clearly true for $k=0$.

  Assume that the claim is true for $k=n-1$. Now $\unpack(H)$ is a self-adjoint $(\mathbb D, \operatorname{id})$ matrix. Therefore, we may obtain a unitarily diagonalisation of it $UDU^* = \unpack(H)$. This gives us a unitary eigenbasis $\{u_1, u_2,\dotsc,u_{2n}\}$ of $\unpack(H)$ with corresponding eigenvalues $\{\lambda_1, \lambda_2, \dotsc, \lambda_{2n}\}$.

  Either all the eigenvalues are real or they're not.

  Consider the case when some eigenvalue $\lambda_i$ is not real, but dual. There is a $u'_i$ such that $\unwind(u'_i) = u_i$. Lemma \ref{dual-eigenvalue-means-eigenvalue} implies that for $w$ obtained by normalising $u'_i$, we have that $H w = w \lambda_i$ and $w^* w = 1$. We have that $\lambda$ in this case is a dual number. By lemma \ref{can-take-orthogonal-complements}, we may take the orthogonal complement of $w$, restrict $H$ to $w^\perp$, and apply the induction hypothesis. We are done.

  Consider the case when all the eigenvalues are real: By lemma \ref{when-eigenvalues-all-real}, we may find a unit eigenvector $v$. $\lambda$ in this case is a pair $(x,y)\in\mathbb R^2$. By lemma \ref{can-take-orthogonal-complements}, we may take the orthogonal complement of $w$. We restrict $H$ to $w^\perp$ and apply the induction hypothesis. We are done.
\end{proof}

\begin{definition}
  We define the spectrum of a self-adjoint \(\svdalg\)-matrix \(H\) (which we will later show is unique) by a triple of finite multisets $(C,L,R)$ where:
  \begin{itemize}
    \item $C$ consists of dual numbers, while $L$ and $R$ consist of real numbers,
    \item $(\forall x \in C) \operatorname{nst}(x) > 0$,
    \item $|L| = |R|$,
    \item $H$ is unitarily similar to $c \oplus (l,r)$ where $c$, $l$ and $r$ are diagonal matrices whose entries belong to $C$, $L$ and $R$ respectively.
  \end{itemize}
\end{definition}

\begin{theorem}
  Given a self-adjoint $\svdalg$ matrix $H$, any two spectra $(C,L,R)$ and $(C',L',R')$ of $H$ are equal.
\end{theorem}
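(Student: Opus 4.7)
My plan is to use two invariants of $H$ under $\svdalg$-unitary similarity: the matrix $\unpack(H)$ taken up to $(\mathbb D, \operatorname{id})$-unitary similarity, and the matrix $\pi(H)$ taken up to $\rrtriv$-unitary similarity, where $\pi : \svdalg \to \rrtriv$ is the projection sending $(a,b) + (a',b')\delta \mapsto (a,b)$. The first invariant will determine $C$ and the multiset $L \sqcup R$; the second will then separate $L$ from $R$.

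For the first invariant, $\unpack$ is a $*$-algebra homomorphism, so $H = UDU^*$ yields the $(\mathbb D,\operatorname{id})$-unitary similarity $\unpack(H) = \unpack(U)\unpack(D)\unpack(U)^*$. The assumed uniqueness of the $(\mathbb D, \operatorname{id})$-spectral theorem then makes the dual-eigenvalue multiset of $\unpack(H)$ an invariant of $H$. I compute this multiset for a diagonal $D = c \oplus (l, r)$ by interleaving rows and columns to block-diagonalise $\unpack(D)$: each $a_i + b_i\varepsilon \in C$ contributes a $2\times 2$ block $\left(\begin{smallmatrix}a_i & b_i\varepsilon \\ b_i\varepsilon & a_i\end{smallmatrix}\right)$ with dual-eigenvalues $a_i \pm b_i\varepsilon$, and each pair $(l_j, r_j)$ contributes the diagonal block $\operatorname{diag}(l_j, r_j)$. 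From the non-real dual-eigenvalues, the condition $\operatorname{nst}(x) > 0$ uniquely recovers $C$ as a multiset; the real dual-eigenvalues give $L \sqcup R$.

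For the second invariant, $\pi$ is a $*$-algebra homomorphism (immediate, since the involution on $\rrtriv$ is trivial), so $\pi(H)$ is $\rrtriv$-unitarily similar to $\pi(D)$. Because $\rrtriv$-unitary similarity acts componentwise, the eigenvalue multisets of the two $\mathbb R$-components of $\pi(H)$ are invariants of $H$. Direct computation on $\pi(D)$ shows these two multisets equal $\operatorname{st}(C) \sqcup L$ and $\operatorname{st}(C) \sqcup R$, where $\operatorname{st}$ is the standard-part of a dual number applied entrywise to $C$. Since $\operatorname{st}(C)$ is already determined from the first step, cancelling it from each multiset recovers $L$ and $R$ individually.

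The main conceptual obstacle is realising that the $\unpack$-invariant alone cannot separate $L$ from $R$: the $*$-algebra automorphism $\sigma : (a,b) + (a',b')\delta \mapsto (b,a) + (b',a')\delta$ of $\svdalg$ swaps the $L$ and $R$ entries of a diagonal spectrum while permuting only the block rows and columns of $\unpack(H)$, preserving its dual-spectrum. The invariant $\pi(H)$ breaks this symmetry because the two real components of $\rrtriv$ carry distinguishing labels, and after this observation the rest is routine.
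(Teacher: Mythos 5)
Your proof is correct and follows essentially the same two-step plan as the paper's: first use the uniqueness of the $(\mathbb D,\operatorname{id})$-spectrum of $\unpack(H)$ to pin down $C$ and the unordered pair $L \sqcup R$, then use the componentwise real spectra of $\operatorname{st}(H)$ (your $\pi(H)$) to separate $L$ from $R$. Your framing in terms of two explicit invariants, and the remark on the swap automorphism $\sigma$ explaining why the first invariant alone cannot distinguish $L$ from $R$, are a cleaner exposition of the same underlying argument rather than a different route.
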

\begin{proof}
  Consider two spectral decompositions of $H$:\begin{itemize}
    \item One where the unitary eigenbasis is $\{u_1,u_2,\dotsc,u_n\}$ with the corresponding spectrum being $(C,L,R)$. We say that the vectors in $\{u_1,\dotsc,u_k\}$ have eigenvalues in $C$.
    \item One where the unitary eigenbasis is $\{u'_1,u'_2,\dotsc,u'_n\}$ with the eigenvalues coming from the spectrum $(C',L',R')$. We say that the vectors in $\{u'_1,\dotsc,u'_{k'}\}$ have eigenvalues in $C'$.
  \end{itemize}
  Observe that $\{u_1, u_1(1,-1), u_2, u_2(1,-1),\dotsc,u_k,u_k(1,-1)\}$  $\cup$ $\{u_{k+1}(1,0),u_{k+1}(0,1)\dotsc,u_n(1,0),u_n(0,1)\}$ form a spanning set. Furthermore, applying $\unwind$ retains the spanning property. We get that for each dual $\lambda_i$, we get that both $\lambda_i$ and $\overline{\lambda_i}$ are eigenvalues of $\unpack(H)$. Since the eigenvalues of $\unpack(H)$ are unique, this establishes that $C = C'$ and $L + R = L' + R'$. By projecting $\operatorname{st}(H)$ on its two components, and applying the uniqueness of real eigenspectra, we get that $L = L'$ and $R = R'$ .
\end{proof}

\begin{corollary}
  $\overline{\operatorname{Herm}}\svdalg$ is a subfree abelian monoid, with the monoid operation being $(C,L,R) + (C',L',R') = (C + C', L + L', R + R')$. This is not free because $|L| = |R|$.
\end{corollary}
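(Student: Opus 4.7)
The plan is to build the stated isomorphism explicitly from the just-established uniqueness theorem, then single out the feature that prevents freeness. Let \(\mathcal F\) denote the free abelian monoid generated by the disjoint union \(\mathbb D_{>0} \sqcup \mathbb R_L \sqcup \mathbb R_R\), where I use two labelled copies of \(\mathbb R\) so as to record whether a real eigenvalue sits in the \(L\)- or in the \(R\)-component. Elements of \(\mathcal F\) can then be identified with triples \((C,L,R)\) of finite multisets of the appropriate types, with componentwise multiset addition as the monoid operation.

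First I would define the spectrum map \(\Phi : \overline{\operatorname{Herm}}\svdalg \to \mathcal F\) sending \([H]\) to \((C,L,R)\). The preceding uniqueness theorem makes this well-defined on equivalence classes, and injectivity comes for free from Theorem \ref{svd-spectral-theorem}: two matrices with the same spectrum are both unitarily similar to the common canonical diagonal matrix \(c \oplus (l,r)\), and hence to each other. That \(\Phi\) is a monoid homomorphism is immediate: if \(H_i = U_i D_i U_i^*\) are spectral decompositions for \(i=1,2\), then \((U_1 \oplus U_2)(D_1 \oplus D_2)(U_1 \oplus U_2)^* = H_1 \oplus H_2\), and reading off eigenvalues gives \((C_1+C_2,\, L_1+L_2,\, R_1+R_2)\). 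Finally the image of \(\Phi\) is exactly \(\{(C,L,R) \in \mathcal F : |L| = |R|\}\), every such triple being realised by the block-diagonal matrix \(c \oplus (l,r)\) built directly from the three multisets; this shows the image is a submonoid of the free abelian monoid \(\mathcal F\), i.e.\ subfree.

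The only non-routine step is showing the image is \emph{not itself} free, so that ``subfree'' is truly the best one can say. Here I would argue via irreducibles. Inside the image the irreducibles are the singletons \((\{x\},\emptyset,\emptyset)\) for \(x \in \mathbb D_{>0}\) together with the pairs \((\emptyset,\{a\},\{b\})\) for \(a,b \in \mathbb R\); the pairs are irreducible precisely because splitting off an \(L\)-element from an \(R\)-element would violate \(|L|=|R|\). For any four distinct reals \(a_1,a_2,b_1,b_2\), the element \((\emptyset,\{a_1,a_2\},\{b_1,b_2\})\) admits the two distinct irreducible factorisations
\[
(\emptyset,\{a_1\},\{b_1\}) + (\emptyset,\{a_2\},\{b_2\})
\quad\text{and}\quad
(\emptyset,\{a_1\},\{b_2\}) + (\emptyset,\{a_2\},\{b_1\}),
\]
contradicting unique factorisation in any free abelian monoid. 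Hence the image is a proper submonoid of \(\mathcal F\) which fails to be free, yielding the corollary. The only place where real care is needed is the last step, since the ``subfree'' language makes it tempting to conflate ``submonoid of something free'' with ``free''; everything else is bookkeeping on top of the uniqueness theorem.
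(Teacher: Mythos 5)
Your argument is correct and it is essentially the one the paper intends (the paper states the corollary without proof, treating it as immediate from the uniqueness theorem for the spectrum). The well-definedness, injectivity, homomorphism, and image-identification steps are exactly the implicit content; the one place where you add genuine value is the non-freeness argument via the two irreducible factorisations of $(\emptyset,\{a_1,a_2\},\{b_1,b_2\})$, which makes precise the paper's terse ``because $|L|=|R|$'' and rules out freeness on \emph{any} generating set, not merely on the visible one. A small stylistic note: you only need $a_1 \neq a_2$ and $b_1 \neq b_2$ (not all four reals pairwise distinct), and the word ``proper'' in your final sentence is true but immaterial -- what matters is failure of unique factorisation into irreducibles, which you establish.
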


\begin{corollary} \label{skew-svd-spectral-theorem}
  A self-adjoint $(\cl_{1,0,1}(\mathbb R), *_{-1})$-matrix is unitarily diagonalisable with a unique spectrum.
\end{corollary}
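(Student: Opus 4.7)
The plan is to leverage Proposition \ref{svd-algl-iso}, which exhibits an explicit $*$-algebra isomorphism $\Phi : \svdalg \to (\cl_{1,0,1}(\mathbb R), *_{-1})$ given by $(a,b) + (a',b')\delta \mapsto (a,b) + (a',-b')\delta$. Such an isomorphism lifts entrywise to an isomorphism of the matrix algebras, which (because it commutes with the respective involutions) carries self-adjoint matrices to self-adjoint matrices, unitary matrices to unitary matrices, and diagonal matrices to diagonal matrices.

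First I would verify that $\Phi$ as defined is indeed a $*$-algebra isomorphism: checking $\mathbb R$-linearity and $\Phi(xy) = \Phi(x)\Phi(y)$ reduces to the identity $\Phi(\delta)\Phi((a',b')) = \Phi(\delta (a',b'))$, which follows from the swap rule $\delta (a',b') = (b',a')\delta$ in both algebras; checking $\Phi(x^{*_1}) = \Phi(x)^{*_{-1}}$ reduces to comparing $(a,b) + (b',a')\delta \mapsto (a,b) - (a',b')\delta$ on the left, with applying $*_{-1}$ to $(a,b) + (a',-b')\delta$ on the right, which agrees.

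Next I would apply $\Phi$ to a given self-adjoint $(\cl_{1,0,1}(\mathbb R), *_{-1})$-matrix $H'$ to obtain a self-adjoint $\svdalg$-matrix $H = \Phi^{-1}(H')$. By Theorem \ref{svd-spectral-theorem}, there is a $\svdalg$-unitary $U$ and a $\svdalg$-diagonal $D$ with $H = U D U^{*_1}$, where each diagonal entry of $D$ is either a dual number or a pair in $\mathbb R^2$. Applying $\Phi$ to this equation gives $H' = \Phi(U) \Phi(D) \Phi(U)^{*_{-1}}$, which is a unitary diagonalisation of $H'$ over $(\cl_{1,0,1}(\mathbb R), *_{-1})$; the spectrum consists of the images under $\Phi$ of the original eigenvalues, which remain either ``dual-like'' scalars or elements of $\mathbb R^2$ (after the harmless sign flip on the $\delta$-component).

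Finally, uniqueness of the spectrum for $H'$ transfers from uniqueness for $H$: two spectra $(C',L',R')$ and $(C'',L'',R'')$ of $H'$ would pull back under $\Phi^{-1}$ to two spectra of $H$, which must coincide by the uniqueness half of the previous theorem, hence so must the originals. The only mildly delicate point I anticipate is bookkeeping the sign change on the infinitesimal part when defining what ``spectrum'' means in the $*_{-1}$ setting (i.e.\ choosing the normalisation $\operatorname{nst}(x)>0$ consistently), but this is a matter of convention rather than a genuine obstacle; there are no new analytic steps needed beyond applying the isomorphism.
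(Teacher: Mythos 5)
Your proof is correct and takes essentially the same approach as the paper, which also just invokes Proposition \ref{svd-algl-iso} and transports the conclusion of Theorem \ref{svd-spectral-theorem} across the isomorphism; you simply make the transport-of-structure step explicit. One minor slip in your re-verification of $*$-compatibility: $\Phi\bigl((a,b)+(b',a')\delta\bigr) = (a,b)+(b',-a')\delta$, not $(a,b)-(a',b')\delta$, though both sides of $\Phi(x^{*_1})=\Phi(x)^{*_{-1}}$ still agree once computed correctly.
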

\begin{proof}
  Proposition \ref{svd-algl-iso} says that $(\cl_{1,0,1}(\mathbb R), *_{-1})$ is isomorphic to $\svdalg$, so this is trivial.
\end{proof}

\section{Proving the spectral theorem for the ``Takagi $*$-algebra'' $\takalg$}

I have discovered this result independently. It is also proved in a paper by Qi et al.\footnote{It's called ``Low Rank Approximation of Dual Complex Matrices'' and remains unpublished. I have not added it to the bibliography because of an ongoing dispute.}
%
%
%
\begin{theorem} \label{tag-spectral-theorem}
  Every self-adjoint \(\takalg\)-matrix \(H\) is unitarily diagonalisable.
\end{theorem}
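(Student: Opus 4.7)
My plan is to mimic the proof of theorem \ref{svd-spectral-theorem} line by line, inducting on the dimension $n$ of $H$. Write $H = M + K\delta$ where $M$ is complex Hermitian and $K$ is complex symmetric; these are precisely the conditions making $H$ self-adjoint over $\takalg$. Each inductive step produces one unit $\takalg$-eigenvector $w$ of $H$, takes the orthogonal complement $w^\perp$, restricts $H$ to this complement, and invokes the hypothesis on the resulting $(n-1)\times(n-1)$ self-adjoint $\takalg$-matrix.

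To produce an eigenvector, I apply $\unpack$ and $\unwind$ as defined in the proof of proposition \ref{takagi}. A short computation confirms that $\unpack(H)$ is a self-adjoint real dual matrix: with $M = A + Bi$ and $K = C + Di$, self-adjointness forces $A = A^T$, $B = -B^T$, $C = C^T$, $D = D^T$, and these four conditions together make $\unpack(H)$ symmetric as a $2n \times 2n$ real dual matrix. By the dual-number spectral theorem, $\unpack(H)$ admits a unitary eigenvector $u$ with some dual eigenvalue $\mu = a + c\varepsilon$ ($a, c \in \mathbb{R}$). The intertwining identity $\unwind(v(a + c\delta)) = (a + c\varepsilon)\unwind(v)$ (valid whenever $a,c$ are real) then implies that the pullback $v := \unwind^{-1}(u)$ satisfies $Hv = v(a + c\delta)$.

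The subtle step is converting $v$ into a unit $\takalg$-eigenvector. Unitarity of $u$ gives only $\unwind(v)^*\unwind(v) = 1$, which unpacks (with $v = z + y\delta$ componentwise for complex vectors $z, y$) to $|z|^2 = 1$ and $\operatorname{Re}(z^*y) = 0$. This is strictly weaker than the condition $v^*v = 1$, which requires $z^*y = 0$ outright. An explicit computation gives $v^*v = 1 + 2i\sigma\delta$ with $\sigma := \operatorname{Im}(z^*y) \in \mathbb{R}$, and right-multiplying $v$ by the $\takalg$-scalar $t := 1 - i\sigma\delta$ produces $(vt)^*(vt) = 1$. A direct check using $\delta i = -i\delta$ and $\delta^2 = 0$ shows that $t$ commutes with $a + c\delta$, so the rescaled vector $w := vt$ remains an eigenvector of $H$ with the same eigenvalue. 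Notably, unlike in the SVD proof, no case analysis on real versus non-real dual eigenvalues is needed at this stage, because a single rescaling suffices in both subcases.

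For the orthogonal-complement step, I would prove a $\takalg$-analog of lemma \ref{can-take-orthogonal-complements}: a projection $P = Q + L\delta$ with $Q$ complex Hermitian and $L$ complex symmetric, satisfying $P^2 = P$ and $P^* = P$, is unitarily diagonalisable with $0/1$ eigenvalues. The SVD-case argument transfers almost verbatim: complex-unitarily diagonalise $Q$, then correct the off-diagonal $\delta$-blocks by an infinitesimal unitary of the form $\bigl(\begin{smallmatrix}I & L_{12}\delta \\ -L_{12}^T\delta & I\end{smallmatrix}\bigr)$. With this lemma in hand, I take the orthogonal complement of $w$, restrict $H$ to it, and apply the induction hypothesis. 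I expect the orthogonal-complement lemma to be the most delicate part of the proof — the eigenvector-finding-and-rescaling portion is relatively painless in the Takagi setting, thanks to the explicit form of the corrector $t$ above.
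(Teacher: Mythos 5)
Your proof is correct, but it takes a genuinely different route from the paper's. The paper's proof is a ``global'' perturbation argument: it first diagonalises $\operatorname{st}(H)$ with a classical complex unitary $S$, then eliminates the off-diagonal $\delta$-blocks $B_{ij}\delta$ all at once using an explicit unitary $P$ whose entries $\pm\frac{B_{ij}\delta}{\lambda_i - \lambda_j}$ solve a Sylvester-type commutator equation, and finally applies proposition~\ref{takagi} (the Takagi decomposition) block-by-block to each diagonal $B_{ii}\delta$. Your proof is instead a dimension-$n$ induction that extracts one unit $\takalg$-eigenvector at a time via $\unpack$/$\unwind$, normalises it with the correcting scalar $t = 1 - i\sigma\delta$, and passes to the orthogonal complement via a $\takalg$-analogue of lemma~\ref{can-take-orthogonal-complements}. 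Each approach buys something: the paper's proof is shorter, more explicit about the resulting canonical form, and aligned with how the author handles $\skewtakalg$ and $\cl_{1,1,1}$ later; yours is more uniform with the SVD proof in theorem~\ref{svd-spectral-theorem}, does not invoke proposition~\ref{takagi} as a prerequisite, and — as you correctly observe — is actually \emph{simpler} than the $\svdalg$ case because the base ring $\mathbb C$ has no nontrivial idempotents, so the real-vs.-dual-eigenvalue case split and the extra displacement hypothesis in lemma~\ref{can-take-orthogonal-complements} both evaporate. Two small remarks: your orthogonal-complement lemma can be bypassed entirely, since $w^*w = 1$ with $w = z + y'\delta$ already forces $z^*z = 1$ and $z^*y' = 0$, which lets you directly build a $\takalg$-unitary $W = U(I + K\delta)$ (with $Ue_1 = z$, $K$ skew-symmetric, $Ke_1 = U^*y'$) sending $e_1 \mapsto w$; and the rescaling identity checks out because $t^*(1 + 2i\sigma\delta)t = (1 + i\sigma\delta)(1 - i\sigma\delta) = 1$ once the sign flip $\delta\,(i\sigma) = -i\sigma\,\delta$ is tracked carefully.
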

%



%
\begin{proof}
  Let \(M\) be a self-adjoint $\takalg$-matrix. We find a complex unitary
  matrix \(S\) such that \(S\operatorname{st}(A)S^*\) is diagonal. We let
  \(M'=SMS^*\), which we write as a block matrix \[M' = \begin{pmatrix}
      \lambda_1 I + B_{11} \delta & B_{12} \delta & \dotsb & B_{1n} \delta \\
      B_{12}^T \delta & \lambda_2 I + B_{22} \delta & \ddots & \vdots \\
      \vdots & \ddots & \ddots & B_{n-1,n}\delta \\
      B_{1n}^T \delta & \dotsb & B_{n-1,n}^T \delta & \lambda_n I + B_{nn} \delta
    \end{pmatrix}.\] where each \(B_{ii}\) is complex symmetric. We let
  \[P = \begin{pmatrix}
      I & \frac{B_{12} \delta}{\lambda_1 - \lambda_2} & \dotsb & \frac{B_{1n} \delta}{\lambda_1 - \lambda_n} \\
      -\frac{B_{12}^T \delta}{\lambda_1 - \lambda_2} & I & \ddots & \vdots \\
      \vdots & \ddots & \ddots & \frac{B_{n-1,n} \delta}{\lambda_{n-1} - \lambda_n}\\
      -\frac{B_{1n}^T \delta}{\lambda_1 - \lambda_n} & \dotsb & -\frac{B_{n-1,n}^T \delta}{\lambda_{n-1} - \lambda_n} & I
    \end{pmatrix},\] and let \(M'' = P M' P^*\). We end up with \(M''\)
  being equal to a direct sum of matrices:
  \(M'' = (\lambda_1 I + B_{11} \delta) \oplus (\lambda_2 I + B_{22} \delta) \oplus \dotsb \oplus (\lambda_n I + B_{nn} \delta)\).
  We finally use the Takagi decomposition (whose existence we proved for a general complex-symmetric matrix using the unpack-and-unwind method in proposition \ref{takagi}) to find matrices \(Q_i\) such
  that \(Q_i B_{ii} Q_i^T\) is equal to a real diagonal matrix. We thus
  get that
  \((Q_1 \oplus Q_2 \oplus \dotsb \oplus Q_n) M'' (Q_1 \oplus Q_2 \oplus \dotsb \oplus Q_n)^*\)
  is a diagonal matrix.
\end{proof}

\section{Proving the spectral theorem for the ``skew-Takagi $*$-algebra'' $\skewtakalg$}

Presently, this result has been proved in a paper in Arxiv where I have been promised coauthorship.\footnote{It's called ``Eigenvalues and Singular Value Decomposition of Dual Complex Matrices''. I have not added the paper to the bibliography because of an ongoing dispute.}

\begin{theorem} \label{skew-tag-spectral-theorem}
    Every self-adjoint
    \(\skewtakalg\)-matrix \(H\) is unitarily similar to a direct sum
    of matrices of the form $\begin{pmatrix}\lambda_i & -\lambda'_i \delta \\ \lambda'_i \delta & \lambda_i \end{pmatrix}$ and $(\lambda_i)$ (where $\lambda_i \in \mathbb R$ and $\lambda'_i > 0$).
  \end{theorem}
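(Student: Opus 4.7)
The plan is to mirror the proof of Theorem \ref{tag-spectral-theorem} almost verbatim, but with the skew-Takagi decomposition (Proposition \ref{skew-takagi}) substituted for the Takagi decomposition at the final stage. A self-adjoint $\skewtakalg$-matrix takes the form $M = A + B\delta$ with $A$ complex Hermitian and $B$ complex \emph{skew}-symmetric (a consequence of $\delta^* = -\delta$ for this involution), so the overall strategy is: first diagonalise $A$, then kill off the residual off-diagonal $\delta$-blocks, then reduce each remaining skew-symmetric diagonal piece to skew-Takagi canonical form.

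First I would use the complex spectral theorem to find a complex unitary $S$ with $SAS^*$ real diagonal. Since $S$ is pure-complex it is automatically $\skewtakalg$-unitary, and using $\delta S^* = S^T \delta$ the conjugate becomes $SMS^* = D + (SBS^T)\delta$, where $D$ is real diagonal and $B' := SBS^T$ is still complex skew-symmetric. Grouping the diagonal entries of $D$ into blocks of distinct real values $\lambda_1,\ldots,\lambda_n$ and writing $B'$ in the corresponding block form, I would then set $P = I + X\delta$ with zero diagonal blocks and off-diagonal blocks $X_{ij} = \tfrac{B'_{ij}}{\lambda_i - \lambda_j}$. The skew-symmetry of $B'$ forces $X = X^T$ globally, and symmetry of $X$ is exactly the condition for $P$ to be $\skewtakalg$-unitary (note the sign flip compared to Theorem \ref{tag-spectral-theorem}, where $X$ had to be skew-symmetric instead). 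A direct block computation, using $P^* = I - X^T\delta$ and $\delta^2 = 0$, then kills the off-diagonal $\delta$-blocks and leaves a block-diagonal matrix whose $i$-th block is $\lambda_i I + B'_{ii}\delta$, with each $B'_{ii}$ still complex skew-symmetric.

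Finally I would apply Proposition \ref{skew-takagi} to each $B'_{ii}$ to obtain a complex unitary $U_i$ and a real canonical skew-symmetric $E_i$ — a direct sum of $2\times 2$ blocks $\begin{pmatrix}0 & -\mu \\ \mu & 0\end{pmatrix}$ with $\mu > 0$ together with $1\times 1$ zero blocks — such that $B'_{ii} = U_i E_i U_i^T$. Conjugating the $i$-th diagonal block by $U_i^*$ (pure-complex, hence $\skewtakalg$-unitary) converts it into $\lambda_i I + E_i\delta$, which is itself a direct sum of $2\times 2$ blocks of the form $\begin{pmatrix}\lambda_i & -\mu\delta \\ \mu\delta & \lambda_i\end{pmatrix}$ and scalar blocks $(\lambda_i)$ — precisely the form required. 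Composing the three unitary conjugations $U \oplus \cdots \oplus U_n$ with $P$ and $S$ gives the desired overall unitary similarity.

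The hard part is not any single step but rather the sign bookkeeping that distinguishes $\skewtakalg$ from $\takalg$: one must track carefully how $\delta^* = -\delta$ propagates so that Hermitian becomes anti-Hermitian on the $\delta$-part (making $B$ skew-symmetric rather than symmetric), and correspondingly that the intertwining matrix $X$ in $P = I + X\delta$ must be symmetric rather than skew-symmetric for $P$ to be unitary. Once these signs are pinned down, the only genuinely nontrivial ingredient invoked is Proposition \ref{skew-takagi}, which has already been established via the unpack-and-unwind method.
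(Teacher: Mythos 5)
Your proposal is correct and follows essentially the same route as the paper's proof: diagonalise the standard part with a pure-complex unitary $S$, block the result by distinct real eigenvalues, eliminate the off-diagonal $\delta$-blocks with a unitary $P = I + X\delta$ (with $X$ symmetric, exactly the sign flip you identify), and finish by applying the skew-Takagi decomposition (Proposition~\ref{skew-takagi}) to each remaining diagonal block $B'_{ii}\delta$. The sign bookkeeping you emphasise — Hermitian plus skew-symmetric for $H$, and $X$ symmetric rather than skew-symmetric for $P$ — matches the positive signs on the sub-diagonal blocks of the paper's $P$.
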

  
  \begin{proof} Let \(M\) be a self-adjoint $\skewtakalg$-matrix. We find a complex unitary matrix \(S\) such that \(S\operatorname{st}(A)S^*\) is diagonal. We let
  \(M'=SMS^*\), which we write as a block matrix \[M' = \begin{pmatrix}
      \lambda_1 I + B_{11} \delta & B_{12} \delta & \dotsb & B_{1n} \delta \\
      -B_{12}^T \delta & \lambda_2 I + B_{22} \delta & \ddots & \vdots \\
      \vdots & \ddots & \ddots & B_{n-1,n} \delta  \\
      -B_{1n}^T \delta & \dotsb & -B_{n-1,n}^T \delta & \lambda_n I + B_{nn} \delta
    \end{pmatrix}.\] where each \(B_{ii}\) is complex skew-symmetric. We let
  \[P = \begin{pmatrix}
      I & \frac{B_{12} \delta}{\lambda_1 - \lambda_2} & \dotsb & \frac{B_{1n} \delta}{\lambda_1 - \lambda_n} \\
      \frac{B_{12}^T \delta}{\lambda_1 - \lambda_2} & I & \ddots & \vdots \\
      \vdots & \ddots & \ddots & \frac{B_{n-1,n} \delta}{\lambda_{n-1} - \lambda_n}\\
      \frac{B_{1n}^T \delta}{\lambda_1 - \lambda_n} & \dotsb & \frac{B_{n-1,n}^T \delta}{\lambda_{n-1} - \lambda_n} & I
    \end{pmatrix},\] and let \(M'' = P M' P^*\). We end up with \(M''\)
  being equal to a direct sum of matrices:
  \(M'' = (\lambda_1 I + B_{11} \delta) \oplus (\lambda_2 I + B_{22} \delta) \oplus \dotsb \oplus (\lambda_n I + B_{nn} \delta)\).
  We finally use the skew-Takagi decomposition (whose existence is proved in \ref{skew-takagi} using unpack-and-unwind) to find matrices \(Q_i\) such that \(Q_i B_{ii} Q_i^T\) is equal to a direct sum of matrices of the form $\begin{pmatrix}0 & -\lambda'_i \\ \lambda'_i & 0 \end{pmatrix}$ and $(0)$ (with the last type of block only occurring once). We thus get that
  \((Q_1 \oplus Q_2 \oplus \dotsb \oplus Q_n) M'' (Q_1 \oplus Q_2 \oplus \dotsb \oplus Q_n)^*\) is of the required form.
  \end{proof}

\section{Spectral theorems for $\cl_{p,q,0}(\mathbb R)$ (for 1 involution) and $\cl_{p,q,1}(\mathbb R)$ (for 2 involutions)}

The results here follow easily from those of the previous sections, and the classification theorem for Clifford algebras over $\mathbb R$.

\newcommand{\clpq}{(\cl_{p,q,0}(\mathbb R), *)}
\newcommand{\clpqonetriv}{(\cl_{p,q,1}(\mathbb R), *_1)}
\newcommand{\clpqonenontriv}{(\cl_{p,q,1}(\mathbb R), *_{-1})}
\begin{definition}
  By $\clpq$, we mean the $*$-algebra whose underlying algebra is (of course) $\cl_{p,q,0}(\mathbb R)$ and whose involution is defined (uniquely) to satisfy $z^* = -z$ if $z^2 = -1$ and $z^* = z$ if $z^2 = 1$.
\end{definition}
\begin{definition}
  By $\clpqonetriv$, we mean the $*$-algebra whose underlying algebra is (of course) $\cl_{p,q,1}(\mathbb R)$ and whose involution is defined the same as in $\clpq$, but with $\delta^{*_1} = \delta$.
\end{definition}
\begin{definition}
  By $\clpqonenontriv$, we mean the $*$-algebra whose underlying algebra is (of course) $\cl_{p,q,1}(\mathbb R)$ and whose involution is defined the same as in $\clpq$, but with $\delta^{*_{-1}} = -\delta$.
\end{definition}

The following has been proven elsewhere \cite{hile_matrix_1990}, and we recall it.

\begin{lemma} \label{classification-clpq}
  Every $\clpq$ is isomorphic to one of the following $*$-algebras:
  \begin{enumerate}
    \item \(\mathbb R\) understood to have the trivial involution,
    \item \(\mathbb C\) understood to be equipped with complex-conjugation as its involution (which coincides with the transpose of its $\mathbb R$-matrix representation),
    \item \(\mathbb H\) understood to be equipped with its standard involution (which coincides with the conjugate-transpose of its $\mathbb R$-matrix and $\mathbb C$-matrix representations),
    \item \(\mathbb R \oplus \mathbb R\) understood as a direct sum of $*$-algebras,
    \item \(\mathbb C \oplus \mathbb C\) understood as a direct sum of $*$-algebras,
    \item \(\mathbb H \oplus \mathbb H\) understood as a direct sum of $*$-algebras,
    \item \(M_{2^n}(R)\) understood as having the conjugate-transpose as its involution, with the ``conjugate'' (more correctly, involution) being inherited from the $*$-algebra $R$,
  \end{enumerate}
  for some values of \(n\), and where \(R\) is any of the $*$-algebras above. This is a bit stronger than the usual statement of the classification theorem for $\cl_{p,q,0}(\mathbb R)$ in that it's an isomorphism of $*$-algebras and not just algebras.
\end{lemma}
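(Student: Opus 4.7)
My plan is to upgrade the classical algebra-level classification of $\cl_{p,q,0}(\mathbb R)$ (Cartan--Bott periodicity) to a $*$-algebra classification, by tracking how the involution $*$ behaves under the standard isomorphisms. First I would recall the inductive proof of the algebra-level classification: the low-dimensional cases $\cl_{0,0,0}$, $\cl_{1,0,0}$, $\cl_{0,1,0}$, $\cl_{2,0,0}$, $\cl_{0,2,0}$, $\cl_{1,1,0}$ are identified with the appropriate entries of the list by inspection, and propagation to all $(p,q)$ is obtained via the standard periodicity tensor-product identities (such as $\cl_{p+1,q+1,0}(\mathbb R) \cong \cl_{p,q,0}(\mathbb R) \otimes \cl_{1,1,0}(\mathbb R)$ together with the mod-$4$ and mod-$8$ identities).

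Next, I would upgrade each such algebra isomorphism to a $*$-algebra isomorphism by exhibiting a positive-definite real inner product on the target side whose adjoint operation realises $*$. The key observation is that the faithful module $V$ used to realise the algebra isomorphism can be endowed with a positive-definite real inner product such that each generator $e_i$ with $e_i^2 = +1$ acts self-adjointly and each generator $e_j$ with $e_j^2 = -1$ acts skew-adjointly. This matches the prescription $z \mapsto z^*$ on generators, and since $*$ is the unique anti-involution of the Clifford algebra with the specified action on generators, it matches $*$ throughout. Choosing an orthonormal basis and identifying the Clifford algebra with $\operatorname{End}_R(V)$ (for the appropriate $R \in \{\mathbb R, \mathbb C, \mathbb H\}$) then turns $*$ into the conjugate-transpose with scalar involution inherited from $R$, which is exactly what the list prescribes.

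For the three building blocks $\mathbb R$, $\mathbb C$, $\mathbb H$, this recipe yields respectively the identity, complex conjugation, and quaternion conjugation. The periodicity isomorphisms lift to $*$-isomorphisms because tensoring the chosen inner product on $V$ with the standard inner product on the additional factor produces another positive-definite inner product, and a routine check on the new generators confirms they act self-adjointly or skew-adjointly in accordance with their Clifford squares. This propagates the $*$-algebra identification through the whole table.

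The main obstacle will be the ``doubled'' cases, in which $\cl_{p,q,0}(\mathbb R)$ is not simple and splits as $A \oplus A$: I would need to confirm that $*$ preserves each factor rather than swapping them, so that the involution on the direct sum really is componentwise as the list claims. This reduces to computing the action of $*$ on the volume element $\omega = e_1 e_2 \cdots e_{p+q}$, whose image under $*$ determines whether the central idempotents $\tfrac{1}{2}(1 \pm \omega)$ are fixed or interchanged. Applying $(e_{i_1} \cdots e_{i_k})^* = e_{i_k}^* \cdots e_{i_1}^*$ with the prescription on generators yields an explicit sign for $\omega^*$ in terms of $p$ and $q$, and in precisely the signatures $p - q \equiv 1, 5 \pmod 8$ where the algebra splits, this sign is $+1$, so each idempotent is fixed and the involution is componentwise. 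An analogous sign computation in the centre-$\mathbb C$ case confirms the complex-conjugation involution there.
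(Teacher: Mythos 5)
The paper does not prove this lemma; it attributes the statement to Hile and Lounesto \cite{hile_matrix_1990} and uses it as a black box, so there is no in-paper argument to compare against. Your plan is sound and correctly isolates the two technical points that make this a $*$-algebra rather than a mere algebra classification: producing an inner product that makes each generator act (skew-)self-adjointly, and deciding whether $*$ fixes or swaps the central idempotents in the non-simple cases.

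Two places where the plan needs filling in. First, the claimed inner product on $V$ making each $e_i$ self-adjoint when $e_i^2 = 1$ and skew-adjoint when $e_i^2 = -1$ is cleanest to obtain by averaging any real inner product over the finite group generated by $\pm e_1, \ldots, \pm e_{p+q}$: this makes each $e_i$ orthogonal, and then $e_i^{\mathsf T} = e_i^{-1} = \pm e_i$ according to its Clifford square. But to identify $*$ with conjugate-transpose over $R = \mathbb C$ or $\mathbb H$ you must upgrade this to an $R$-sesquilinear form; you should additionally average over a maximal compact subgroup of $R^\times$ acting through the commutant (this commutes with the Clifford action, so it does not disturb the adjointness of the $e_i$), and then promote the real inner product to an $R$-valued one. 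This step is where the ``inherited from $R$'' clause of item 7 actually gets proved, and it is not a routine basis choice. Second, your sign computation for the volume element admits a shortcut that makes the mod-$8$ case analysis unnecessary: $\omega^* = e_{p+q}^* \cdots e_1^* = (-1)^q e_{p+q}\cdots e_1 = (-1)^q(-1)^{(p+q)(p+q-1)/2}\omega$, which is exactly $\omega^2 \cdot \omega$. Thus whenever $\omega$ is central and $\omega^2 = 1$ (the split cases) one has $\omega^* = \omega$ automatically, and whenever $\omega$ is central and $\omega^2 = -1$ one has $\omega^* = -\omega$, giving complex conjugation on the centre. Finally, once the inner-product step is in place, explicitly tracking $*$ through the Bott periodicity tensor isomorphisms is redundant: the inner product already exhibits $*$ as conjugate-transpose over the commutant, and the algebra-level classification then identifies the commutant and matrix size, which pins down the entry of the list.
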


We can now prove:

\begin{proposition}Every self-adjoint matrix \(H\) over
    \(\clpq\) admits a $\clpq$-unitary
    matrix \(U\) and a $\clpq$-diagonal real matrix \(D\)
    such that \(UDU^* = H\).
\end{proposition}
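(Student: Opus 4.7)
My plan is to invoke Lemma~\ref{classification-clpq} and handle each of the seven isomorphism classes listed there, reducing everything to classical spectral theorems.

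For the three ``ground'' $*$-algebras $\mathbb R$, $\mathbb C$, and $\mathbb H$, I would simply quote the classical spectral theorems for real-symmetric, complex-Hermitian, and quaternion-Hermitian matrices respectively. For the direct-sum cases $R \oplus R$ (with $R \in \{\mathbb R, \mathbb C, \mathbb H\}$), the involution acts componentwise, so a self-adjoint $(R \oplus R)$-matrix is a pair $(H_1, H_2)$ of $R$-self-adjoint matrices; diagonalising each component separately via the $R$-case and reassembling as $U = (U_1, U_2)$ and $D = (D_1, D_2)$ yields the required factorisation immediately.

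The only nontrivial case is the matrix algebra $\clpq \cong M_{2^n}(R)$ equipped with its conjugate-transpose involution. Here I plan to use the canonical algebra isomorphism $M_k(M_{2^n}(R)) \cong M_{k \cdot 2^n}(R)$, which interchanges the two conjugate-transpose involutions: a self-adjoint $k \times k$ matrix $H$ over $M_{2^n}(R)$ corresponds to a self-adjoint $(k \cdot 2^n) \times (k \cdot 2^n)$ matrix $\tilde H$ over $R$. Applying the spectral theorem over $R$ (already established in the previous two paragraphs) gives $\tilde H = \tilde U \tilde D \tilde U^*$ with $\tilde U$ unitary over $R$ and $\tilde D$ diagonal with real entries; reversing the identification should then produce the desired $U$ and $D$ over $\clpq$.

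The main obstacle is verifying that the reinterpreted $D$ really is diagonal as a $\clpq$-matrix — i.e., that $\tilde D$ corresponds, under the identification, to a block-diagonal matrix whose diagonal blocks are scalar multiples of $I \in M_{2^n}(R)$. I expect this to follow from the observation that each $\tilde H$-eigenspace inherits an $M_{2^n}(R)$-module structure from the $M_{2^n}(R)$-action on $H$, forcing every real eigenvalue to have $R$-multiplicity divisible by $2^n$ and allowing me to choose an orthonormal basis of each eigenspace that arises by ``unpacking'' an $M_{2^n}(R)$-orthonormal basis of the corresponding $\clpq$-eigenspace. Once that bookkeeping is handled, the factorisation obtained at the $R$-level descends unchanged to the desired factorisation over $\clpq$.
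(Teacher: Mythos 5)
Your overall plan follows the paper's exactly: reduce via Lemma~\ref{classification-clpq}, settle the ground cases $\mathbb R$, $\mathbb C$, $\mathbb H$, dispose of the direct sums componentwise, and handle the matrix case via $M_k(M_{2^n}(R))\cong M_{k\cdot 2^n}(R)$. The only presentational difference in the first three cases is that you simply quote the classical spectral theorems, whereas the paper re-derives the $\mathbb C$ and $\mathbb H$ cases from the $\mathbb R$ case via its unpack-and-unwind technique, keeping the argument self-contained and on-theme; both are valid.

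Your handling of case 7, however, contains a genuine error. You worry that the diagonal $\tilde D$ obtained at the $R$-level might not repackage into a $\clpq$-diagonal matrix whose entries are real scalars $\lambda I_{2^n}$, and you propose to save this by arguing that each $\tilde H$-eigenspace inherits an $M_{2^n}(R)$-module structure, forcing eigenvalue multiplicities to be divisible by $2^n$. This is false. Take $k=1$, $R=\mathbb R$, $n=1$: then $H=\operatorname{diag}(1,2)\in M_2(\mathbb R)$ is a self-adjoint $1\times 1$ matrix over $\clpq\cong M_2(\mathbb R)$, and $\tilde H=H$ has eigenvalues $1$ and $2$, each of $R$-multiplicity $1$, not $2$. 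The right scalar action of $M_{2^n}(R)$ lives on $\clpq^k$, whose $R$-dimension is $k\cdot 2^{2n}$, not on $R^{k\cdot 2^n}$, so the claimed module structure on $\tilde H$-eigenspaces does not exist. The same example shows the proposition is outright false under the strict reading you attribute to it: $\operatorname{diag}(1,2)$ is not orthogonally conjugate to any $\lambda I_2$. The statement must be read in the weaker sense that $D$ unpacks, under the same isomorphism $M_k(M_{2^n}(R))\cong M_{k\cdot 2^n}(R)$, to a real diagonal matrix over $R$; under that reading your unpack--diagonalise--repack argument is already complete with no multiplicity bookkeeping, and coincides with the paper's one-line ``by block matrices, this reduces to the first six cases.''
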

    
\begin{proof}    
  Using lemma \ref{classification-clpq}, we prove our theorem for each of the seven cases within that lemma in turn:
  
  \begin{enumerate}
  \def\labelenumi{\arabic{enumi}.}
  \tightlist
  \item
    First, \(H\) has a real eigenvector \(v\) with real eigenvalue. This
    follows either from the Fundamental Theorem of Algebra or from
    Lagrange multipliers (we won't show the details here because they're
    well covered elsewhere). Finally, restrict \(H\) to the orthogonal
    complement of \(v\). \(H\) over this orthogonal complement is still
    self-adjoint, so the construction can be repeated.
  \item
    We use the unpack-and-unwind method. Let $\unpack_{\mathbb C}(A + Bi) := \begin{pmatrix}A & -B \\ B & A\end{pmatrix}$ for $A$ and $B$ any $\mathbb R$-matrices. Observe that \(\unpack_{\mathbb C}(H)\) is self-adjoint over \(\mathbb R\), and hence has a real
    eigenvector \(v\) of real eigenvalue \(\lambda\). Let $\unwind_{\mathbb C}(w) := \begin{pmatrix} \Re(w) \\ \Im(w)\end{pmatrix}$. We have that $v = \unwind(v')$ for some $\mathbb C$-vector $v'$. We thus have that $\unwind(H v') = \unpack(H) \unwind(v') = \lambda \unwind(v') = \unwind(\lambda v')$. Since $\unwind$ is injective, we may cancel to get $Hv' = \lambda v'$. We restrict $H$ to the orthogonal complement of $v'$, and repeat.
  \item
  We use the unpack-and-unwind method. Let $\unpack_{\mathbb H}(A + Bi + Cj + Dk) = \unpack_{\mathbb C}\begin{pmatrix}A - Bi & -C + Di\\ C + Di & A + Bi \end{pmatrix}$. Observe that \(\unpack_{\mathbb H}(H)\) is self-adjoint over \(\mathbb R\), and hence has a real
    eigenvector \(v\) of real eigenvalue \(\lambda\).  Let $\unwind_{\mathbb H}(ai + bi + cj + dk) := \begin{pmatrix} a \\ b \\ c \\ d\end{pmatrix}$, where $a, b, c, d$ are $\mathbb R$-vectors. We have that $v = \unwind(v')$ for some $\mathbb H$-vector $v'$. We thus have that $\unwind(H v') = \unpack(H) \unwind(v') = \lambda \unwind(v') = \unwind(\lambda v')$. We restruct $H$ to the orthogonal complement of $v'$, and repeat.
  \item
    If \(M\) is a self-adjoint over \(\mathbb R \oplus \mathbb R\)
    then
    \(M = (L,K)\) where \(L\) and \(K\) are self-adjoint matrices over
    \(\mathbb R\). Then this reduces to case 1.
  \item
    If \(M\) is a self-adjoint over \(\mathbb C \oplus \mathbb C\)
    then
    \(M = (L,K)\) where \(L\) and \(K\) are self-adjoint matrices over
    \(\mathbb C\). Then this reduces to case 2.
  \item
    If \(M\) is a self-adjoint over \(\mathbb H \oplus \mathbb H\)
    then
    \(M = (L,K)\) where \(L\) and \(K\) are self-adjoint matrices over
    \(\mathbb H\). Then this reduces to case 3.
  \item
    By block matrices, this reduces to the first six cases.\qedhere
  \end{enumerate}
\end{proof}

\begin{proposition}Every self-adjoint matrix \(H\) over
    \(\clpqonetriv\) admits a $\clpqonetriv$-matrix \(U\) and a diagonal $\clpqonetriv$-matrix \(D\) such that \(UDU^* = H\).
\end{proposition}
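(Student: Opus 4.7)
The plan is to imitate the proof of theorem \ref{tag-spectral-theorem} for the Takagi $*$-algebra, since its scaffolding transfers almost unchanged once one accounts for the new Clifford-algebraic details. Writing a self-adjoint $\clpqonetriv$-matrix as $H = H_0 + K\delta$ with $H_0$ and $K$ matrices over $\cl_{p,q,0}(\mathbb R)$, the facts that $\delta$ anticommutes with every Clifford generator (so $\delta z = \phi(z)\delta$ for the grade involution $\phi$ on $\cl_{p,q,0}(\mathbb R)$) and $\delta^{*_1} = \delta$ unpack the condition $H^{*_1}=H$ into $H_0^* = H_0$ (self-adjointness over $\clpq$) together with $K^* = \phi(K)$ (applied entrywise).

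First I would apply the preceding proposition (the spectral theorem for $\clpq$) to $H_0$, producing a $\clpq$-unitary $S$ with $SH_0S^* = D_0$ a real diagonal matrix. Since $S$ has no $\delta$-component it remains unitary over $\clpqonetriv$, and a direct calculation yields $SHS^* = D_0 + K_1\delta$ where $K_1$ still satisfies $K_1^* = \phi(K_1)$. Group equal eigenvalues of $D_0$ to obtain block structure $D_0 = \lambda_1 I \oplus \dotsb \oplus \lambda_m I$, and view $K_1 = (K_{ij})$ accordingly. Exactly as in theorem \ref{tag-spectral-theorem}, I would kill the off-diagonal blocks by conjugating by $P = I + T\delta$ where $T_{ij} = K_{ij}/(\lambda_i-\lambda_j)$ for $i\ne j$ and $T_{ii}=0$; the condition $T^* = -\phi(T)$ needed for $P$ to be $\clpqonetriv$-unitary follows from $K_1^* = \phi(K_1)$ by a short check. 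After this step $H$ becomes block-diagonal with blocks $\lambda_i I + K_{ii}\delta$, each $K_{ii}$ still satisfying $K_{ii}^* = \phi(K_{ii})$.

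The main obstacle is now to diagonalise each block $\lambda_i I + K_{ii}\delta$, which amounts to finding a $\clpq$-unitary $Q_i$ with $Q_i K_{ii} \phi(Q_i^*)$ real diagonal. Here I would invoke lemma \ref{classification-clpq} to break into the underlying $*$-algebra cases. If $\clpq \cong \mathbb R$ then $\phi=\mathrm{id}$, $K_{ii}$ is real symmetric, and the real spectral theorem applies. If $\clpq \cong \mathbb C$ then $\phi$ is complex conjugation, $K_{ii}$ is complex-symmetric, and proposition \ref{takagi} applies. If $\clpq \cong \mathbb R \oplus \mathbb R$ then $\phi$ is the swap and the transformation $K \mapsto Q K \phi(Q^*)$ is exactly the framework for the SVD, covered by proposition \ref{svd}. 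The direct-sum algebras $\mathbb C \oplus \mathbb C$ and $\mathbb H \oplus \mathbb H$ split into two independent problems of the base types, and the matrix-algebra cases $M_{2^n}(R)$ reduce by routine block-matrix arguments. The genuinely new case is $\clpq \cong \mathbb H$, where $\phi$ is the main involution and no Takagi-type theorem has yet been established in the paper; this gap I would fill by an unpack-and-unwind argument reducing to one of the real or complex decompositions above, along the lines of proposition \ref{skew-quaternion-spectral}.
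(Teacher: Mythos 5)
Your approach is genuinely different from the paper's. The paper invokes Lemma \ref{classification-clpq} \emph{at the outset} to split $\cl_{p,q,1}(\mathbb R)$ into seven algebra types, and then cites a previously-established spectral theorem for each type in turn (the dual-number theorem, Theorem \ref{tag-spectral-theorem}, Theorem \ref{svd-spectral-theorem}, an unpack-and-unwind argument for $\mathbb H\otimes\mathbb D$, and direct-sum/block reductions). You instead try to run a single Takagi-style ``diagonalize $H_0$, then kill off-diagonal blocks by $P = I + T\delta$'' argument uniformly across all cases, deferring the case analysis to the final step of diagonalizing the blocks $\lambda_i I + K_{ii}\delta$. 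This is a cleaner-looking scaffold when it works, but it does not work in all cases.

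The gap is in the off-diagonal cancellation. Setting $T_{ij} = K_{ij}/(\lambda_i - \lambda_j)$ requires $\lambda_i - \lambda_j$ to be \emph{invertible} whenever $\lambda_i \neq \lambda_j$. This holds when the eigenvalues of $H_0$ lie in $\mathbb R$ (cases $\mathbb R$, $\mathbb C$, $\mathbb H$), but fails when $\cl_{p,q,0}(\mathbb R)$ is a direct sum of two matrix algebras with the grade involution swapping the two summands, i.e.\ $p - q \equiv 1 \pmod 8$, the $\svdalg$ situation. There $H_0 = (H_L, H_R)$ with $H_L, H_R$ independent real symmetric matrices, and its eigenvalues are pairs $(\mu_i,\nu_i)\in\mathbb R\oplus\mathbb R$ with $\mu_i$ and $\nu_i$ coming from completely unrelated spectra. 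Two such pairs can be distinct but agree in one coordinate, e.g.\ $(1,2)$ and $(1,3)$, whose difference $(0,-1)$ is a nonzero zero divisor; no grouping or re-pairing of eigenbases avoids this in general. For that to be solvable the corresponding entry of $K_1$ would need to vanish in the matching coordinate, and there is no reason it should. This is exactly why the paper proves the $\svdalg$ spectral theorem (Theorem \ref{svd-spectral-theorem}) by the unpack-and-unwind route and simply cites it, rather than re-running the $P = I + T\delta$ argument. Your citation of Proposition \ref{svd} for the diagonal blocks does not rescue the argument, because the breakdown occurs one step earlier, in reaching block-diagonal form at all. A correct version of your plan would have to branch on Lemma \ref{classification-clpq} \emph{before} the off-diagonal-killing step, at which point it essentially collapses into the paper's proof.

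A smaller remark: for the $\mathbb H$ case you write as if $\phi$ being the main/grade involution creates a brand-new Takagi-type obstruction. Since the grade involution on a matrix algebra over $\mathbb H$ is inner, $\delta$ can be renormalised to be central (at the cost of a possible sign change in $\delta^*$), and the condition $K^* = \phi(K)$ then becomes either self-adjointness or skew-self-adjointness of $K$ over $\mathbb H$ — both already handled by the quaternionic spectral theorem or Proposition \ref{skew-quaternion-spectral}. So while your instinct to fall back on unpack-and-unwind there is reasonable (the paper does the same), describing it as a gap in the literature overstates the difficulty.
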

    
\begin{proof}
  We add an element $\delta$ to each of the seven cases in lemma \ref{classification-clpq}. This element $\delta$ satisfies:
  
  \begin{enumerate}
  \def\labelenumi{\arabic{enumi}.}
  \tightlist
  \item
    \(\delta\) is in the centre.
  \item
    \(\delta i = -i \delta\)
  \item
    \(\delta\) is in the centre.
  \item
    \(\delta (x,y) = (y,x) \delta\).
  \item
    \(\delta\) is in the centre.
  \item
    \(\delta\) is in the centre.
  \item
    The algebra is \(M_{2^n}(R \cup \{\delta\})\).
  \end{enumerate}
  
  In each case, the involution is extended so that $\delta^* = \delta$. We now prove the theorem for each of the seven cases in turn:
  
  \begin{enumerate}
  \def\labelenumi{\arabic{enumi}.}
  \tightlist
  \item
    This has been proven before in my paper on the dual number spectral
    theorem. \cite{gutin_generalizations_2021}
  \item
    See theorem \ref{tag-spectral-theorem}.
  \item We use the unpack-and-unwind method. Let $\unpack_{\mathbb H \otimes \mathbb D}(A + Bi + Cj + Dk) = \unpack_{\mathbb C \otimes \mathbb D}\begin{pmatrix}A - Bi & -C + Di\\ C + Di & A + Bi \end{pmatrix}$ (where $A, B, C, D$ are $\mathbb D$-matrices). Observe that \(\unpack(H)\) is self-adjoint over \(\mathbb D\), and hence has a \(\mathbb D\)-eigenvector \(v\) of \(\mathbb D\)-eigenvalue
    \(\lambda\) (a fact which follows from the dual-number spectral theorem \cite{gutin_generalizations_2021}). Let $\unwind_{\mathbb H \otimes \mathbb D}(ai + bi + cj + dk) := \begin{pmatrix} a \\ b \\ c \\ d\end{pmatrix}$, where $a, b, c, d$ are $\mathbb D$-vectors. We have that $v = \unwind(v')$ for some $\mathbb H \otimes \mathbb D$-vector $v'$. We thus have that $\unwind(H v') = \unpack(H) \unwind(v') = \lambda \unwind(v') = \unwind(\lambda v')$. We restruct $H$ to the orthogonal complement of $v'$, and repeat.
  \item
    See theorem \ref{svd-spectral-theorem}.
  \item
    If \(M\) is a self-adjoint over
    \((\mathbb C \otimes \mathbb D) \oplus (\mathbb C \otimes \mathbb D)\)
    then
    \(M = (L,K)\) where \(L\) and \(K\) are self-adjoint matrices over
    \(\mathbb C \otimes \mathbb D\). Then this reduces to case 2.
  \item
    If \(M\) is a self-adjoint over
    \((\mathbb H \otimes \mathbb D)\oplus (\mathbb H \otimes \mathbb D)\)
    then
    \(M = (L,K)\) where \(L\) and \(K\) are self-adjoint matrices over
    \(\mathbb H\otimes \mathbb D\). Then this reduces to case 3.
  \item
    By block matrices, this reduces to the first six cases.\qedhere
  \end{enumerate}
\end{proof}

\newcommand{\dualquaternionnontriv}{(\cl_{1,1,1}(\mathbb R),*_{-1})}
\begin{lemma} \label{skew-quaternion-generalised-spectral-theorem}
    Every self-adjoint
    \(\dualquaternionnontriv\)-matrix \(H\) is unitarily similar to a direct sum
    of matrices of the form $\begin{pmatrix}\lambda_i & -\lambda'_i \delta \\ \lambda'_i \delta & \lambda_i \end{pmatrix}$ and $(\lambda_i)$ (where $\lambda_i \in \mathbb R$ and $\lambda'_i > 0$).
\end{lemma}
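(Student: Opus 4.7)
The plan is to mimic the proof of Theorem \ref{skew-tag-spectral-theorem}, replacing the base $*$-algebra $\mathbb C$ with the underlying Clifford $*$-algebra $(\cl_{1,1,0}(\mathbb R), *)$, which is what $\dualquaternionnontriv$ reduces to when $\delta$ is set to $0$. Given a self-adjoint $\dualquaternionnontriv$-matrix $M$, I would first write $M = M_0 + M_1 \delta$, observe that $M_0$ is self-adjoint over $(\cl_{1,1,0}(\mathbb R),*)$, and apply the spectral theorem for $\clpq$ (proved earlier) to find a unitary $S$ over $(\cl_{1,1,0}(\mathbb R),*)$ such that $S \operatorname{st}(M) S^*$ is real-diagonal with equal eigenvalues grouped together.

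Then I would form $M' = SMS^*$, which inherits the block structure
\[M' = \begin{pmatrix}
\lambda_1 I + B_{11}\delta & B_{12}\delta & \dotsb & B_{1n}\delta \\
-B_{12}^* \delta & \lambda_2 I + B_{22}\delta & \ddots & \vdots \\
\vdots & \ddots & \ddots & B_{n-1,n}\delta \\
-B_{1n}^*\delta & \dotsb & -B_{n-1,n}^*\delta & \lambda_n I + B_{nn}\delta
\end{pmatrix},\]
where the minus signs on the lower triangle and the identity $B_{ii}^* = -B_{ii}$ both arise from $\delta^{*_{-1}} = -\delta$ combined with self-adjointness of $M$. The off-diagonal $\delta$-blocks are then killed by conjugating by the unitary matrix $P$ whose $(i,j)$-block is $B_{ij}\delta/(\lambda_i - \lambda_j)$ for $i \neq j$ and identity on the diagonal, exactly as in Theorem \ref{skew-tag-spectral-theorem}. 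This reduces $M$ to the block-diagonal form $(\lambda_1 I + B_{11} \delta) \oplus \dotsb \oplus (\lambda_n I + B_{nn} \delta)$, with each $B_{ii}$ skew-adjoint over $(\cl_{1,1,0}(\mathbb R),*)$.

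On each block I would then invoke a skew-Hermitian spectral theorem for the base algebra $(\cl_{1,1,0}(\mathbb R),*)$ to find unitary $Q_i$ with $Q_i B_{ii} Q_i^*$ a direct sum of $2\times 2$ blocks $\begin{pmatrix} 0 & -\mu \\ \mu & 0 \end{pmatrix}$ and singleton zero blocks; assembling $Q_1 \oplus \dotsb \oplus Q_n$ and conjugating delivers the required canonical form. The main obstacle is justifying this last ingredient: the paper establishes a skew-Hermitian spectral theorem over $\mathbb H$ (Proposition \ref{skew-quaternion-spectral}) but not directly over $\cl_{1,1,0}(\mathbb R)$. I would close this gap via lemma \ref{classification-clpq}, which identifies $(\cl_{1,1,0}(\mathbb R),*)$ with a concrete $*$-algebra in its list, and then apply the unpack-and-unwind method used throughout the paper to reduce the skew-self-adjoint problem to the real skew-symmetric spectral theorem, whose canonical form already has exactly the required $2\times 2$ block shape.
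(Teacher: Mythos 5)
Your proposal has exactly the same skeleton as the paper's proof: diagonalise $\operatorname{st}(M)$ by a unitary over the base, kill the off-diagonal $\delta$-blocks with the matrix $P$ built from $B_{ij}\delta/(\lambda_i-\lambda_j)$, then hand each surviving diagonal $\delta$-block to a skew-Hermitian spectral theorem over the base algebra. The difference is which base algebra that is, and here you and the paper part ways in a way worth spelling out. The paper's proof opens with ``We find a quaternion unitary matrix $S$'', declares each $B_{ii}$ ``complex skew-Hermitian'', and finishes by invoking Proposition \ref{skew-quaternion-spectral}; it is transparently working over $\mathbb H$ with $\delta$ in the centre, exactly as this lemma is later cited in case 3 of the subsequent proposition (where $\delta$ is explicitly declared central). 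You, by contrast, set $\delta = 0$ in $\cl_{1,1,1}(\mathbb R)$ and correctly read off $\cl_{1,1,0}(\mathbb R) \cong M_2(\mathbb R)$ (case 7 in Lemma \ref{classification-clpq}, not case 3), which is a genuine mismatch between how the macro $\dualquaternionnontriv$ is defined and how the lemma is proved and used.

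If one insists on the literal $\cl_{1,1,1}(\mathbb R)$ reading, as you do, two steps in your outline no longer go through as written, and both have the same source: in $\cl_{1,1,1}(\mathbb R)$ the generator $\delta$ \emph{anticommutes} with $e_1$ and $e_2$, so it is not central. First, self-adjointness of $B\delta$ with $\delta^{*_{-1}} = -\delta$ gives $(B_{ji}\delta)^{*_{-1}} = -\widehat{B_{ji}^{*}}\,\delta$ (where $\hat{\,\cdot\,}$ is the grade involution pulling $\delta$ past the scalar), hence the condition on the diagonal blocks is $B_{ii}^{*} = -\hat B_{ii}$, not $B_{ii}^{*} = -B_{ii}$ as you assert. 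Second, in the final conjugation $Q_i(\lambda_i I + B_{ii}\delta)Q_i^{*}$ the $\delta$ must be commuted past $Q_i^{*}$, producing $\lambda_i I + Q_i B_{ii}\widehat{Q_i^{*}}\,\delta$, so what needs to be put into canonical form is $Q_i B_{ii}\widehat{Q_i^{*}}$, not $Q_i B_{ii} Q_i^{*}$. The paper avoids both of these because over $\mathbb H$ with central $\delta$ the grade twist disappears and $B_{ii}^* = -B_{ii}$ and $Q_i B_{ii}\delta Q_i^* = Q_i B_{ii} Q_i^*\delta$ really do hold. So your proof faithfully exposes an inconsistency in the source, but to complete it under the $\cl_{1,1,1}(\mathbb R)$ reading you would need either a version of the base skew-spectral theorem normalising $Q B \widehat{Q^{*}}$, or you should note that the lemma as stated appears to be mislabelled and is actually about the $*$-algebra $\mathbb H[[\delta]]$ with central $\delta$ and $\delta^{*} = -\delta$, where your argument and the paper's coincide.
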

  
  \begin{proof} Let \(M\) be a self-adjoint $\dualquaternionnontriv$-matrix. We find a quaternion unitary matrix \(S\) such that \(S\operatorname{st}(A)S^*\) is diagonal. We let
  \(M'=SMS^*\), which we write as a block matrix \[M' = \begin{pmatrix}
      \lambda_1 I + B_{11} \delta & B_{12} \delta & \dotsb & B_{1n} \delta \\
      -B_{12}^T \delta & \lambda_2 I + B_{22} \delta & \ddots & \vdots \\
      \vdots & \ddots & \ddots & B_{n-1,n} \delta  \\
      -B_{1n}^T \delta & \dotsb & -B_{n-1,n}^T \delta & \lambda_n I + B_{nn} \delta
    \end{pmatrix}.\] where each \(B_{ii}\) is complex skew-Hermitian. We let
  \[P = \begin{pmatrix}
      I & \frac{B_{12} \delta}{\lambda_1 - \lambda_2} & \dotsb & \frac{B_{1n} \delta}{\lambda_1 - \lambda_n} \\
      \frac{B_{12}^* \delta}{\lambda_1 - \lambda_2} & I & \ddots & \vdots \\
      \vdots & \ddots & \ddots & \frac{B_{n-1,n} \delta}{\lambda_{n-1} - \lambda_n}\\
      \frac{B_{1n}^* \delta}{\lambda_1 - \lambda_n} & \dotsb & \frac{B_{n-1,n}^* \delta}{\lambda_{n-1} - \lambda_n} & I
    \end{pmatrix},\] and let \(M'' = P M' P^*\). We end up with \(M''\)
  being equal to a direct sum of matrices:
  \(M'' = (\lambda_1 I + B_{11} \delta) \oplus (\lambda_2 I + B_{22} \delta) \oplus \dotsb \oplus (\lambda_n I + B_{nn} \delta)\).
  We finally use proposition \ref{skew-quaternion-spectral} to find matrices \(Q_i\) such that \(Q_i B_{ii} Q_i^*\) is equal to a direct sum of matrices of the form $\begin{pmatrix}0 & -\lambda'_i \\ \lambda'_i & 0 \end{pmatrix}$ and $(0)$. We thus get that
  \((Q_1 \oplus Q_2 \oplus \dotsb \oplus Q_n) M'' (Q_1 \oplus Q_2 \oplus \dotsb \oplus Q_n)^*\) is of the required form.
  \end{proof}

\begin{proposition}
  Every self-adjoint matrix \(H\) over
  \(\clpqonenontriv\) admits a unitary
  \(\clpqonenontriv\)-matrix \(U\) such that:
  \begin{itemize}
      \item when $p = q = 0$ or $q = 1$ and $p = 0$, there is a block-diagonal $D$ such \(UDU^* = H\) where $D$'s blocks are of the form $\begin{pmatrix}\lambda_i & -\lambda'_i \delta \\ \lambda'_i \delta & \lambda_i \end{pmatrix}$, $(\lambda_i)$, $\begin{pmatrix}\lambda'_i \delta \end{pmatrix}$ or $(0)$, where $\lambda_i, \lambda'_i$ are real and strictly positive.
      \item when $p = 1$ and $q = 0$, there is a block-diagonal $D$ such \(UDU^* = H\) where $D$'s blocks are of the form $\begin{pmatrix}\lambda_i & -\lambda'_i \delta \\ \lambda'_i \delta & \lambda_i \end{pmatrix}$, $(\lambda_i)$, $\begin{pmatrix}\lambda'_i \delta \end{pmatrix}$ or $(0)$, where the $\lambda_i$ are in $\mathbb R \oplus \mathbb R$ and the $\lambda'_i$ are real and strictly positive.
      \item when $p + q > 1$, $H = UDU^*$ where $D$ is \(\clpqonenontriv\)-diagonal.
  \end{itemize}
\end{proposition}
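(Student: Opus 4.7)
The plan is to mirror the proof of the preceding proposition (for $\clpqonetriv$), substituting $*_{-1}$ for $*_1$ throughout. I would first invoke lemma \ref{classification-clpq} to identify $\cl_{p,q,0}(\mathbb R)$ with one of seven standard $*$-algebras, then adjoin $\delta$ with the anticommutation rules from the Clifford structure and extend the involution by $\delta^{*_{-1}} = -\delta$. The three $(p,q)$ sub-cases of the proposition correspond to distinct subsets of the seven classification cases: $p=q=0$ lies in case 1 (underlying $\mathbb R$), $p=0,q=1$ in case 2 (underlying $\mathbb C$), $p=1,q=0$ in case 4 (underlying $\mathbb R \oplus \mathbb R$), and $p+q>1$ spreads across cases 3, 5, 6, 7.

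Each classification case invokes a spectral theorem already in hand. Case 2 is precisely $\skewtakalg$, so theorem \ref{skew-tag-spectral-theorem} applies directly. Case 4 is $(\cl_{1,0,1}(\mathbb R), *_{-1})$, which by proposition \ref{svd-algl-iso} is $*$-isomorphic to $\svdalg$: I would transport the matrix across the isomorphism, apply theorem \ref{svd-spectral-theorem}, and transport back. Case 3 produces $\dualquaternionnontriv$ or a larger $\mathbb H$-based algebra, handled by lemma \ref{skew-quaternion-generalised-spectral-theorem}. Cases 5 and 6 split componentwise into cases 2 and 3; case 7 reduces to the first six by the usual block-matrix reformulation. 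Case 1, the genuinely new ingredient, I would handle directly: a self-adjoint $(\mathbb R[\delta], *_{-1})$-matrix is $A + B\delta$ with $A$ real symmetric and $B$ real skew-symmetric, so I would diagonalise $A$ with an $\mathbb R$-orthogonal matrix and, within each constant-eigenvalue subblock of the diagonalised $A$, apply the real skew-symmetric spectral theorem to the restriction of $B$. These steps together settle the first two sub-cases.

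The main obstacle is the third sub-case, where the proposition demands a fully diagonal $D$ but the procedure above only yields a block-diagonal form containing $2 \times 2$ blocks $\begin{pmatrix}\lambda & -\lambda'\delta \\ \lambda'\delta & \lambda\end{pmatrix}$. The plan is to exploit the extra structure available when $p + q > 1$: the ambient algebra then always contains an element $J$ with $J^2 = -1$ that either commutes or anticommutes with $\delta$. Concretely, when $p \geq 2$ one can take $J = e_a e_b$ for two Clifford generators squaring to $+1$; when $q \geq 2$ one can take $J = e_a e_b$ for two generators squaring to $-1$ (both give $J^2 = -1$ with $\delta J = J\delta$); and when only $q \geq 1$ one can take $J$ equal to a single generator squaring to $-1$, giving $J^2 = -1$ and $\delta J = -J\delta$. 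A direct computation then shows each $2 \times 2$ block has eigenvectors of the form $(1, \pm J)^T$ (with sign dictated by the commutation case) and eigenvalues $\lambda \pm \lambda' J \delta$, and the corresponding change of basis is $\clpqonenontriv$-unitary. The technical heart of the proof is the case-by-case check that such a $J$ exists compatibly with $*_{-1}$, together with the bookkeeping needed to mesh this extra diagonalization step with the case-7 reduction.
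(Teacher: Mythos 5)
Your plan mirrors the paper's proof quite closely: both use lemma \ref{classification-clpq}, adjoin $\delta$ with $\delta^{*_{-1}} = -\delta$, and dispatch the seven classification cases by citing theorem \ref{skew-tag-spectral-theorem}, lemma \ref{skew-quaternion-generalised-spectral-theorem}, and corollary \ref{skew-svd-spectral-theorem} (the last of which itself routes through proposition \ref{svd-algl-iso}), with cases 5--7 reduced componentwise or by block reformulation. The one place you genuinely diverge is case 1, where the paper simply cites \cite{gutin_generalizations_2021}, whereas you sketch a direct argument. That sketch is incomplete as written: after diagonalising the symmetric part $A$, the skew-symmetric $B$ generically has nonzero blocks \emph{between} distinct eigenspaces of $A$, and these must first be cancelled by a unitary of the form $P = I + \sum_{i \neq j} \tfrac{B_{ij}}{\lambda_i - \lambda_j}\delta\, e_{ij}$ (exactly as in the proofs of theorems \ref{tag-spectral-theorem} and \ref{skew-tag-spectral-theorem}) before the within-eigenspace skew-symmetric spectral theorem can be invoked block by block.

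Your observation about the third sub-case deserves a separate remark, because it points at something the paper's proof is silent on: for the pure $\mathbb H$ and $\mathbb H \oplus \mathbb H$ Clifford algebras (i.e.\ $(p,q) = (0,2)$ and $(0,3)$), lemma \ref{skew-quaternion-generalised-spectral-theorem} only delivers a block-diagonal form with $2 \times 2$ blocks, and a further conjugation by something like $\tfrac{1}{\sqrt{2}}\begin{pmatrix} 1 & 1 \\ -J & J\end{pmatrix}$ (with $J^2 = -1$, $J^* = -J$, $J$ central together with $\delta$) is required to flatten each such block into $\operatorname{diag}(\lambda + \lambda' J\delta,\ \lambda - \lambda' J\delta)$. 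That part of your plan is correct and fills a real gap. However, the sub-case you describe where $J$ \emph{anticommutes} with $\delta$ does not go through: if $\delta J = -J\delta$ then $J\delta J = \delta$, and a short computation shows that $(1, \pm J)^T$ fails to be an eigenvector of $\begin{pmatrix}\lambda & -\lambda'\delta \\ \lambda'\delta & \lambda\end{pmatrix}$ unless $\lambda' = 0$; indeed, one can check no vector of the form $(1, v)^T$ works in the anticommuting case. Fortunately this sub-case never arises in the third bullet: the only $p + q > 1$ configuration with $p < 2$, $q < 2$ is $(1,1)$, where the underlying algebra is $M_2(\mathbb R)$ rather than $\mathbb H$, so it falls under the case-7 matrix-algebra reduction and no $J$-trick is needed at all (the $2 \times 2$ blocks produced by the case-1 decomposition are simply absorbed as single diagonal entries of $M_2(\mathbb R[\delta])$). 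You should excise the anticommuting branch and note explicitly that whenever an extra diagonalisation is needed, $\delta$ is central and a commuting $J$ is available.
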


\begin{proof}
  We add an element $\delta$ to each of the seven cases in lemma \ref{classification-clpq}. This element $\delta$ satisfies:
  
  \begin{enumerate}
  \def\labelenumi{\arabic{enumi}.}
  \tightlist
  \item
    \(\delta\) is in the centre.
  \item
    \(\delta i = -i \delta\)
  \item
    \(\delta\) is in the centre.
  \item
    \(\delta (x,y) = (y,x) \delta\).
  \item
    \(\delta\) is in the centre.
  \item
    \(\delta\) is in the centre.
  \item
    The algebra is \(M_{2^n}(R \cup \{\delta\})\).
  \end{enumerate}
  
  This exhausts all Clifford algebras of the form
  \(Cl_{p,q,1}(\mathbb R)\). Using this, we prove our theorem for each of
  the seven cases in turn:
  
  \begin{enumerate}
  \def\labelenumi{\arabic{enumi}.}
  \tightlist
  \item
    This has been proven before in my paper on the dual number spectral
    theorem. \cite{gutin_generalizations_2021}
  \item
    See theorem \ref{skew-tag-spectral-theorem}.
  \item
    See lemma \ref{skew-quaternion-generalised-spectral-theorem}.
  \item
    See corollary \ref{skew-svd-spectral-theorem}.
  \item
    If \(M\) is a self-adjoint over
    \((\mathbb C \otimes \mathbb D) \oplus (\mathbb C \otimes \mathbb D)\)
    then
    \(M = (L,K)\) where \(L\) and \(K\) are self-adjoint matrices over
    \(\mathbb C \otimes \mathbb D\). Then this reduces to case 2.
  \item
    If \(M\) is a self-adjoint over
    \((\mathbb H \otimes \mathbb D)\oplus (\mathbb H \otimes \mathbb D)\)
    then
    \(M = (L,K)\) where \(L\) and \(K\) are self-adjoint matrices over
    \(\mathbb H\otimes \mathbb D\). Then this reduces to case 3.
  \item
    By block matrices, this reduces to the first six cases.\qedhere
  \end{enumerate}
\end{proof}

\bibliographystyle{plain}
\bibliography{paper}

\end{document}